\documentclass[11pt]{amsart} 
\usepackage{amssymb}
\usepackage{amsmath}
\usepackage{enumitem}
\usepackage{mathrsfs}
\usepackage[english]{babel}
\usepackage[all]{xy}
\setcounter{tocdepth}{1}
\usepackage{hyperref}
\usepackage{marginnote}
\usepackage{version}

\usepackage{stmaryrd}

\usepackage{fullpage}

\RequirePackage{mathrsfs} 

\newtheorem{theorem}{Theorem}[section]
\newtheorem{lemma}[theorem]{Lemma}
\newtheorem{proposition}[theorem]{Proposition}
\newtheorem{corollary}[theorem]{Corollary}
\newtheorem{conjecture}[theorem]{Conjecture}

\theoremstyle{definition}
\newtheorem*{ack}{Acknowledgements}
\newtheorem*{con}{Conventions}
\newtheorem{remark}[theorem]{Remark}

\newtheorem{definition}[theorem]{Definition}

\numberwithin{equation}{section} \numberwithin{figure}{section}

\DeclareMathOperator{\Pic}{Pic} 
 
\DeclareMathOperator{\Aut}{Aut}

\DeclareMathOperator{\Spec}{Spec}

\DeclareMathOperator{\Hom}{Hom}

\DeclareMathOperator{\Bs}{Bs}

\DeclareMathOperator{\Bir}{Bir}

\DeclareMathOperator{\Graph}{Graph}

\DeclareMathOperator{\Sur}{Sur}

\newcommand\ZZ{\mathbb{Z}}
\newcommand\NN{\mathbb{N}}
\newcommand\QQ{\mathbb{Q}}

\newcommand\CC{\mathbb{C}}

\def\P{\mathbb{P}}
\def\O{\mathcal{O}}
\def\bydef{:=}

\usepackage{color}

\definecolor{orange}{rgb}{1,0.5,0}

\title[Algebraic   intermediate hyperbolicities]{Algebraic  intermediate hyperbolicities}

 \author{Antoine Etesse}
\address{Antoine Etesse\\
Aix Marseille Univ\\
		CNRS, Centrale Marseille, I2M\\
		Marseille\\
		France}
\email{antoine.etesse@univ-amu.fr}

\author{Ariyan Javanpeykar}
\address{Ariyan Javanpeykar \\
Institut f\"{u}r Mathematik\\
Johannes Gutenberg-Universit\"{a}t Mainz\\
Staudingerweg 9, 55099 Mainz\\
Germany.}
\email{peykar@uni-mainz.de}

 \author{Erwan Rousseau}
\address{Erwan Rousseau \\ Institut Universitaire de France
	\& Aix Marseille Univ\\
		CNRS, Centrale Marseille, I2M\\
		Marseille\\
		France}
\email{erwan.rousseau@univ-amu.fr }
 
 \thanks{The third author was supported by the ANR project \lq\lq FOLIAGE\rq\rq{}, ANR-16-CE40-0008  and by the European Union’s Horizon 2020 research and innovation program under the Marie Sklodowska-Curie grant agreement No 75434}

\subjclass[2020]
{32Q45 
14E07 
14J60 
14G99 
(11G35,  
14G05,  
)} 

\keywords{Hyperbolicity, moduli spaces of maps, ampleness, positivity, Lang  conjectures }

\begin{document}

\maketitle

\thispagestyle{empty}
 
 \begin{abstract}    
 
We extend Lang's conjectures to the setting of intermediate hyperbolicity and prove two new results motivated by these conjectures. More precisely, we  first  extend the notion of algebraic hyperbolicity (originally introduced by Demailly) to the setting of intermediate hyperbolicity   and show that this property holds if the appropriate exterior power of the cotangent bundle is ample. Then, we prove that this intermediate algebraic hyperbolicity implies the finiteness of the group of birational automorphisms and of the set of surjective maps from a given projective variety.  Our work answers the algebraic  analogue of a question of Kobayashi on analytic hyperbolicity.
 \end{abstract}

 \section*{Introduction}  
 This paper is concerned with Lang's conjectures on hyperbolic varieties.
Lang's conjectures relate different notions of hyperbolicity for $X$ (see \cite{Lang2} and  \cite[\S12]{JBook} for a summary of his conjectures) from complex analysis to algebraic geometry and number theory. The aim of this paper is to prove several results motivated by these conjectures, and to extend Lang's conjectures on hyperbolic varieties to the more general setting of ``intermediate hyperbolicity'' (see Section \ref{section:lang}). The latter is hinted at in Lang's original paper, but not worked out anywhere in the literature in full. In fact, in \cite[page~162]{Lang2} Lang explicitly says that ``I omit the whole area of  intermediate hyperbolicity``. In this paper, we present his conjectures in the intermediate setting, and prove several new results going in their direction.

 Let us briefly explain the notion of ``intermediate hyperbolicity'' from a  complex-analytic perspective.
Recall that a complex space $X$ is \emph{Kobayashi hyperbolic} if and only if the Kobayashi pseudometric $d_X$ on $X$ is a metric (see   \cite{Kobayashi}).  In the hope of understanding the property of being Kobayashi hyperbolic for a complex space,  Eisenmann \cite{eisenman1970intrinsic} extended Kobayashi's definition and introduced  the notion of $p$-analytic hyperbolicity for all $1\leq p \leq \dim(X)$ (see \cite[Definition~1.3.(ii)]{Demailly}  for a precise definition).

We note that $1$-analytic hyperbolicity and Kobayashi hyperbolicity coincide for compact complex-analytic spaces. Furthermore, a complex space $X$ of dimension $d=\dim(X)$ is   $d$-analytically hyperbolic (as defined below) if and only if it is ``measure-hyperbolic'' (as defined in Kobayashi's book \cite{Kobayashi}). Moreover, if $X$ is $p$-analytically hyperbolic, then it is $(p+1)$-analytically hyperbolic, so that we have a string of implications 
    \[
    X \ \textrm{is} \ 1\textrm{-analytically hyperbolic} \implies   \ldots \implies X \ \textrm{is} \ (\dim X)\textrm{-analytically hyperbolic}.
    \]
    In particular, the notions of $p$-analytic hyperbolicity interpolate between the   notions of Kobayashi hyperbolicity and measure hyperbolicity.  This is why we sometimes refer to   the notion of $p$-hyperbolicity  as being an ``intermediate hyperbolicity''.
 Kobayashi-Ochiai proved that a smooth projective variety $X$ of general type (i.e., $\omega_X$ is big) is $\dim(X)$-analytically hyperbolic, and Demailly generalized their result to the setting of intermediate hyperbolicity by replacing the condition that $\omega_X$  is big by a different positivity condition on $\Lambda^p \Omega^1_X$. We state here a simpler version of his result for the sake of convenience.

\begin{theorem}[Demailly]\label{thm:dem2} Let $X$ be a smooth projective variety over $\CC$.
If  $\bigwedge^p \Omega^1_X$ is  ample, then $X$ is   $p$-analytically hyperbolic.
\end{theorem}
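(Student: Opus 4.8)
The plan is to reduce the statement to a uniform upper bound on the $p$-Jacobians of holomorphic maps from the unit ball, and then to derive that bound by combining the positivity coming from ampleness with a generalized Ahlfors--Schwarz lemma. First I would recall the relevant definition: writing $\BB_p\subset\CC^p$ for the unit ball and $e=\partial/\partial z_1\wedge\cdots\wedge\partial/\partial z_p$ for the standard decomposable $p$-vector at the origin, the Eisenman $p$-metric $E_p^X$ is obtained on a decomposable $\xi\in\bigwedge^p T_{X,x}$ by taking the infimum of $\lambda>0$ for which there is a holomorphic $f\colon\BB_p\to X$ with $f(0)=x$ and $\lambda\cdot(\bigwedge^p df_0)(e)=\xi$, and that $X$ is $p$-analytically hyperbolic exactly when $E_p^X$ is positive on every nonzero decomposable $p$-vector. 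Since $X$ is compact, it therefore suffices to exhibit a continuous Hermitian metric on $\bigwedge^p T_X$ together with a constant $C$, independent of $f$, such that $\|(\bigwedge^p df_0)(e)\|\le C$ for every holomorphic $f\colon\BB_p\to X$; such an estimate gives $E_p^X(\xi)\ge C^{-1}\|\xi\|$ and hence nondegeneracy.

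Next I would use the hypothesis to manufacture the needed metric and curvature. Since $\bigwedge^p\Omega^1_X$ is ample, Kodaira's theorem and the standard construction for ample bundles provide a smooth Hermitian metric $h$ on $\bigwedge^p\Omega^1_X$ with Griffiths-positive Chern curvature; dualizing gives a smooth metric, still called $h$, on $\bigwedge^p T_X=(\bigwedge^p\Omega^1_X)^{\vee}$ that is Griffiths-negative, and by compactness of $X$ this negativity is uniform: fixing a Kähler form $\omega$, there is $\varepsilon>0$ with $\langle i\Theta_h(\bigwedge^p T_X)\,u,u\rangle\le-\varepsilon\,\|u\|^2\,\omega$ for every local section $u$ of $\bigwedge^p T_X$.

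The analytic core is then the following. Given $f$, set $\sigma:=(\bigwedge^p df)(e)$, a holomorphic section of $f^{*}\bigwedge^p T_X$ over $\BB_p$, and let $\Phi_f:=\|\sigma\|_h^2$ denote the associated (semipositive) volume form on the $p$-dimensional ball, degenerate exactly where the $p$-Jacobian vanishes. On $\{\sigma\ne 0\}$ the Lelong--Poincaré curvature formula gives $i\partial\bar\partial\log\Phi_f=i\partial\bar\partial\log\|\sigma\|_h^2\ge -\langle f^{*}(i\Theta_h(\bigwedge^p T_X))\,\sigma,\sigma\rangle/\|\sigma\|_h^2\ge \varepsilon\,f^{*}\omega$, where the last inequality uses the curvature bound in the single decomposable direction cut out by $\sigma$. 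Thus the Ricci form $\operatorname{Ric}\Phi_f=-i\partial\bar\partial\log\Phi_f$ is $\le-\varepsilon\,f^{*}\omega<0$, and after comparing $f^{*}\omega$ with $\Phi_f$ one gets the Ricci form bounded above by a negative multiple of $\Phi_f$ itself. This is precisely the hypothesis of the Ahlfors--Schwarz (Kobayashi--Ochiai) volume-decreasing lemma on the ball, which yields $\Phi_f\le C\cdot v_{\BB_p}$ for the Poincaré volume form $v_{\BB_p}$ with $C$ universal; evaluating densities at the origin gives $\|(\bigwedge^p df_0)(e)\|_h^2=\Phi_f(0)\le C'$ with $C'$ independent of $f$. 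The zero locus of $\sigma$ creates no difficulty, since $\log\Phi_f\to-\infty$ there and the usual maximum-principle/regularization argument applies. Combined with the first paragraph, this uniform bound shows $E_p^X$ is nondegenerate, hence $X$ is $p$-analytically hyperbolic.

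The step I expect to be the main obstacle is the last one: correctly formulating and applying the Ahlfors--Schwarz lemma in the intermediate range $1<p<\dim X$. Because $\sigma$ takes values in the higher-rank bundle $\bigwedge^p T_X$ rather than in a line bundle, one may only exploit the negativity of the curvature along the one decomposable direction determined by $\sigma$ itself, and one must convert this Griffiths-type estimate into a genuine Ricci inequality for the (possibly degenerate) volume form $\Phi_f$ on the $p$-dimensional ball. Keeping the normalizations consistent so that the Poincaré volume produces a finite universal constant at the origin, and handling the degeneracy of $\Phi_f$ rigorously, is where the real care is required.
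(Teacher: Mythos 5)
First, a point of reference: the paper itself gives no proof of this statement --- it is quoted from Demailly with a citation --- so the comparison is with Demailly's original argument. Its architecture is exactly the one you propose: reduce $p$-analytic hyperbolicity to a uniform upper bound on $\|(\bigwedge^p df_0)(e)\|$ over all holomorphic $f\colon\BB_p\to X$, and obtain that bound from a volume-decreasing Ahlfors--Schwarz lemma. Your reduction to the uniform bound is correct, the Gram-determinant comparison of $(f^{*}\omega)^p$ with $\Phi_f$ is correct, and the volume-form Ahlfors--Schwarz step, including the treatment of the degeneracy locus of $\sigma$, is standard and sound.

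The genuine gap is in your second paragraph: ampleness of $\bigwedge^p\Omega^1_X$ does \emph{not} provide a smooth Hermitian metric with Griffiths-positive curvature. That implication is precisely the Griffiths conjecture, which is open (only the converse, Griffiths positivity implies ampleness, is known), so the curvature inequality $i\partial\bar\partial\log\|\sigma\|_h^2\ge\varepsilon\,f^{*}\omega$ rests on a metric you cannot produce. The repair is standard and is what Demailly actually does: ampleness of $\bigwedge^p\Omega^1_X$ means that $\OO_{\PP(\bigwedge^p T_X)}(1)$ is ample on the projectivization $\pi\colon\PP(\bigwedge^p T_X)\to X$, so by Kodaira it carries a smooth metric with $i\Theta\bigl(\OO(1)\bigr)\ge\varepsilon\,\pi^{*}\omega$ for some $\varepsilon>0$ by compactness. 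Since $\sigma=(\bigwedge^p df)(e)$ is decomposable and not identically zero, $f$ lifts to $\tilde f=[\sigma]\colon\BB_p\dashrightarrow\PP(\bigwedge^p T_X)$ on the complement of $Z_\sigma$, and $\sigma$ becomes a holomorphic section of the tautological \emph{line} bundle $\tilde f^{*}\OO(-1)$. The Lelong--Poincar\'e equation for this line bundle then gives $i\partial\bar\partial\log\|\sigma\|^2=2\pi[Z_\sigma]+\tilde f^{*}\bigl(i\Theta(\OO(1))\bigr)\ge\varepsilon\,f^{*}\omega$ as currents, with no Griffiths hypothesis. This also dissolves the difficulty you flag at the end --- extracting a one-direction estimate from a higher-rank bundle --- since the tautological subbundle isolates exactly the direction cut out by $\sigma$. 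With this substitution the rest of your argument goes through.
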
 

Motivated by Lang's conjectures, the first aim of this paper is to prove algebraic analogues of Demailly's theorem (see Theorem \ref{thm:1} for a precise statement).  Our second aim is to investigate finiteness properties of  varieties which are hyperbolic in some intermediate sense (see Theorem \ref{thm:2}).

\subsection{An algebraic analogue of Demailly's analytic theorem on intermediate hyperbolicity}\label{results1}
  We introduce a  notion of intermediate hyperbolicity inspired by  Demailly's notion of algebraic hyperbolicity \cite{Demailly} (see also \cite{JAut, JBook, JKa, JXie, Rousseau1}). Let  us  first introduce the following convenient terminology:
 \begin{definition}[Non-degenerate maps]
A rational map \(f: Y \dashrightarrow X\) between projective varieties is said to be  \textsl{non-degenerate} if   its image \(f(Y)\) is of dimension \(\min(\dim(X), \dim(Y))\). 
 \end{definition}
Note that, if \(\dim(Y) \leq \dim(X)\), a rational map $Y\dashrightarrow X$ of projective varieties is non-degenerate if and only if it is generically finite onto its image. If \(\dim(Y)>\dim(X)\), such a rational map is non-degenerate if and only if it is dominant.

 \begin{definition}[$p$-algebraic hyperbolicity modulo $\Delta$]\label{def:alg}
 Let $X$ be a   projective variety,   let $\Delta$ be a   closed subset of $X$, and let $p$ be a positive integer. We say that $X$ is \emph{$p$-algebraically hyperbolic modulo $\Delta$} if, for every ample line bundle $L$ on $X$, there is a real number $\alpha=\alpha(X, \Delta,L)$ such that, for every smooth projective $p$-dimensional variety $Y$  with ample canonical bundle $\omega_Y$ and every non-degenerate rational map $f:Y\dashrightarrow X$ whose image is not included in \(\Delta\), the following inequality is satisfied
 \[
 (f^{*} L) \cdot (K_Y)^{p-1} \leq \alpha \cdot K_Y^p.
 \]
 
 Following Lang's terminology, we will say that $X$ is \emph{pseudo-$p$-algebraically hyperbolic} if there is a proper closed subset $\Delta \subsetneq X$ such that $X$ is $p$-algebraically hyperbolic modulo $\Delta$. Also, we will say that $X$ is \emph{$p$-algebraically hyperbolic} if $X$ is $p$-algebraically hyperbolic modulo the empty subset.
\end{definition}
Note that the pull-back by \(f\) of the line bundle  \(L\) is well-defined: since \(Y\) is smooth, the rational map \(f\) is  well-defined outside a closed subset \(F\) of codimension at least \(2\), and one  has the equality of the Picard groups \(\Pic(Y\setminus F)=\Pic(Y)\).
Note also that a projective variety $X$ is $1$-algebraically hyperbolic modulo the emptyset if and only if it is algebraically hyperbolic in Demailly's sense \cite{JKa}.   More generally, a projective variety $X$ is pseudo-$1$-algebraically hyperbolic if and only if it is pseudo-algebraically hyperbolic \cite[\S 9]{JBook}.

Observe that we restrict to varieties $Y$ with ample canonical bundle. This is analogous to the fact that  one may test the algebraic hyperbolicity of a projective variety on maps from curves of genus at least two (see the introduction of \cite{JKa} for a detailed explanation). 
Let us also emphasize on the fact that the constant \(\alpha\) appearing in the definition is \textsl{independent} of \(Y\): see Definition \ref{def:bounded} where this hypothesis is relaxed.
 
Our first result gives an algebraic analogue of Demailly's theorem (Theorem \ref{thm:dem2}). 
 To state it, we recall the definition of ampleness modulo a closed subset. We refer the reader to Lazarsfeld's books for a definition of the augmented base locus \cite{Lazzie1, Lazzie2}.

\begin{definition}\label{def:brotbek_yeng}  Let $X$ be a projective variety, let $\Delta \subset X$ be a closed subset,   let $E$ be a vector bundle on $X$, and let $p:\mathbb{P}(E^\vee)\to X $ be the natural projection. Then $E$ is \emph{ample modulo $\Delta$} if the augmented base locus $B^+(\mathcal{O}_{\mathbb{P}(E^\vee)}(1))$ is included in $p^{-1}(\Delta)$.
\end{definition}

In other words, 
$E$ is ample modulo $\Delta$, if and only if for any ample line bundle $A$ on $\mathbb{P}(E^\vee)$, there is an integer $m\geq 1$ such that the base locus of $\mathcal{O}_{\mathbb{P}(E^\vee)}(m)\otimes A^{-1}$ is included in $p^{-1}(\Delta)$.   
 
 Our first main result now reads as follows.

\begin{theorem}\label{thm:1}
Let $X$ be a smooth projective variety, and let $\Delta\subset X$ be a proper closed subset. If $\bigwedge^p\Omega^1_X$ is ample modulo $\Delta$, then $X$ is $p$-algebraically hyperbolic modulo $\Delta$.  
\end{theorem}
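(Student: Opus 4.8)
The plan is to reduce the desired numerical inequality to a positivity statement on the projectivized bundle $\mathbb{P}(E^\vee)$, where $E=\bigwedge^{p}\Omega^1_X$, by lifting a non-degenerate $f\colon Y\dashrightarrow X$ along the $p$-th wedge of its differential. Write $\pi\colon\mathbb{P}(E^\vee)\to X$ for the projection and $\mathcal{O}(1)$ for the tautological bundle, so that ampleness of $E$ modulo $\Delta$ reads $B^{+}(\mathcal{O}(1))\subseteq\pi^{-1}(\Delta)$. Given a smooth projective $Y$ of dimension $p$ with $\omega_Y$ ample and a non-degenerate $f\colon Y\dashrightarrow X$ with $f(Y)\not\subseteq\Delta$, the differential induces
\[
\textstyle\bigwedge^{p}df\colon \omega_Y^{-1}=\bigwedge^{p}T_Y \longrightarrow f^{*}\bigwedge^{p}T_X=f^{*}E^{\vee},
\]
which is generically nonzero precisely because $f$ is non-degenerate (so $df$ has generic rank $p$). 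First I would take the saturation $S\subseteq f^{*}E^{\vee}$ of its image; this is a line bundle of the form $S=\omega_Y^{-1}(D)$ with $D\geq 0$ effective, and the inclusion $S\hookrightarrow f^{*}E^{\vee}$ defines a rational lift $\hat f\colon Y\dashrightarrow\mathbb{P}(E^\vee)$, regular in codimension one, with $\pi\circ\hat f=f$ and $\hat f^{*}\mathcal{O}(1)\equiv K_Y-D$.

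The second ingredient converts ampleness modulo $\Delta$ into a usable decomposition. By the description of the augmented base locus, $B^{+}(\mathcal{O}(1))\subseteq\pi^{-1}(\Delta)$ yields a numerical equivalence $\mathcal{O}(1)\equiv A+N$ with $A$ an ample $\mathbb{Q}$-divisor class on $\mathbb{P}(E^\vee)$ and $N\geq 0$ effective satisfying $\Supp N\subseteq\pi^{-1}(\Delta)$. Fixing the ample line bundle $L$ on $X$, I would choose $\beta>0$ so large that $\beta A-\pi^{*}L$ is nef (possible since $A$ is ample and $\pi^{*}L$ is nef), and set $\alpha:=\beta$. Crucially, $A$, $N$ and $\beta$ depend only on $X$, $\Delta$ and $L$, so $\alpha$ is independent of $Y$, as the definition demands.

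To conclude, I would resolve the indeterminacy of $\hat f$ by $\mu\colon\tilde Y\to Y$, an isomorphism in codimension one, with $g:=\hat f\circ\mu\colon\tilde Y\to\mathbb{P}(E^\vee)$ a morphism, and put $H:=\mu^{*}K_Y$, a nef and big class with $H^{p}=K_Y^{p}$. Pulling back the decomposition along $g$ and intersecting with $H^{p-1}$ gives
\[
\beta\,g^{*}\mathcal{O}(1)\cdot H^{p-1}-f^{*}L\cdot K_Y^{p-1}
= g^{*}(\beta A-\pi^{*}L)\cdot H^{p-1}+\beta\,g^{*}N\cdot H^{p-1}.
\]
On the right, $g^{*}(\beta A-\pi^{*}L)$ is nef and $g^{*}N$ is effective—here I use that $g(\tilde Y)=\hat f(Y)\not\subseteq\Supp N$, which holds because $\pi(\hat f(Y))=f(Y)\not\subseteq\Delta$—so both terms are $\geq 0$ against the nef class $H^{p-1}$. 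On the left, $g^{*}\mathcal{O}(1)\cdot H^{p-1}=(K_Y-D)\cdot K_Y^{p-1}$, the $\mu$-exceptional contributions vanishing against $H^{p-1}$. Rearranging, and using $D\cdot K_Y^{p-1}\geq 0$ since $D\geq 0$ and $K_Y$ is ample, yields
\[
f^{*}L\cdot K_Y^{p-1}\leq \beta\,K_Y^{p}-\beta\,D\cdot K_Y^{p-1}\leq \alpha\,K_Y^{p},
\]
which is exactly the required inequality.

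I expect the main obstacle to be the passage from the hypothesis $B^{+}(\mathcal{O}(1))\subseteq\pi^{-1}(\Delta)$ to an honest decomposition $\mathcal{O}(1)\equiv A+N$ with $A$ ample and $\Supp N\subseteq\pi^{-1}(\Delta)$, together with the care needed to make the lift $\hat f$ and the pullback $g^{*}N$ rigorous: one must check that $\bigwedge^{p}df$ is generically nonzero (non-degeneracy), that the saturation genuinely produces an effective divisor $D$, and that $f(Y)\not\subseteq\Delta$ indeed prevents any component of $N$ from pulling back to all of $\tilde Y$. The remaining steps are formal positivity manipulations.
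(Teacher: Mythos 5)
Your overall strategy is the same as the paper's: lift $f$ to $\PP(\bigwedge^p T_X)$ via $\bigwedge^p df$, observe that the lift satisfies $\hat f^*\OO(1)=K_Y-D$ with $D$ effective (this is exactly the paper's statement that $K_Y\otimes\tilde f^*\OO(-1)$ is effective), and then convert ampleness modulo $\Delta$ into an effective comparison with $f^*L$; the final intersection-theoretic bookkeeping is also fine. But the step you yourself flag as the main obstacle is a genuine gap as written: from $B^+(\OO(1))\subseteq\pi^{-1}(\Delta)$ one cannot in general extract a \emph{single} decomposition $\OO(1)\equiv A+N$ with $A$ ample and $\Supp N\subseteq\pi^{-1}(\Delta)$. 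The description of the augmented base locus gives $B^+(\OO(1))=\bigcap\Supp N$, the intersection taken over \emph{all} decompositions $\OO(1)\equiv A+N$, and this intersection need not be attained by any one of them. Concretely, if $\Delta$ has codimension $\geq 2$ in $X$ (e.g.\ a point), then $\pi^{-1}(\Delta)$ has codimension $\geq 2$ in $\PP(\bigwedge^p T_X)$ and contains no divisor, so your requirement forces $N=0$, i.e.\ $\OO(1)$ ample --- strictly stronger than the hypothesis. (Compare a big and nef, non-ample divisor whose augmented base locus is a contracted curve on a threefold: no decomposition is supported on that curve.)

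The repair is not difficult, and there are two routes. (i) By Noetherianity finitely many decompositions $\OO(1)\equiv A_i+N_i$ already achieve $\bigcap_i\Supp N_i\subseteq\pi^{-1}(\Delta)$; since $\hat f(Y)\not\subseteq\pi^{-1}(\Delta)$, some $i$ satisfies $\hat f(Y)\not\subseteq\Supp N_i$, and taking $\alpha=\max_i\beta_i$ (with $\beta_iA_i-\pi^{*}L$ nef) restores the uniformity in $f$ and $Y$. (ii) The paper's route, which is cleaner: ampleness modulo $\Delta$ gives an integer $m$, depending only on $X$, $\Delta$ and $L$, with $\Bs\big(\OO(m)\otimes\pi^{*}L^{-1}\big)\subseteq\pi^{-1}(\Delta)$; for each $f$ with $f(Y)\not\subseteq\Delta$ there is then a global section not vanishing identically on $\hat f(Y)$, so $\hat f^{*}\OO(m)\otimes f^{*}L^{-1}$ is effective, and adding the effective divisor $m\big(K_Y-\hat f^{*}\OO(1)\big)$ yields $f^{*}L\cdot K_Y^{p-1}\leq m\,K_Y^{p}$ directly, without ever producing an ample-plus-effective splitting of $\OO(1)$. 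With either repair your argument goes through and yields the same uniform constant as the paper's proof.
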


This   algebraic analogue of Demailly's analytic result fits in well with Lang's  ``pseudofied'' notions of hyperbolicity (see \cite{Lang2}). In fact,  Lang defines a   variety to be pseudo-hyperbolic if it is hyperbolic modulo some proper closed subset. He conjectured that pseudo-hyperbolicity (in any sense of the word ``hyperbolic'') coincides with being of general type (see \cite[page~161]{Lang2} or the more recent \cite[\S 12]{JBook}).  Demailly actually also proved a ``pseudofied'' version of his theorem (Theorem \ref{thm:dem2}). We omit a precise formulation of his result here.

The algebraic hyperbolicity of a projective variety  implies  that moduli spaces of maps from varieties are ``bounded'' (i.e. have only finitely many connected components). We refer to \cite{JKa} for precise statements.
The following definition extends the notion of boundedness introduced in \cite{JKa} (see also \cite{JBook, JXie}). 
   
 \begin{definition} [$p$ algebraically bounded modulo $\Delta$]
 \label{def:bounded}
 Let $X$ be a   projective variety,   let $\Delta$ be a   closed subset of $X$, and let $p$ be a positive integer. We say that $X$ is \emph{$p$-algebraically bounded modulo $\Delta$} if, for every ample line bundle $L$ on $X$, every smooth projective \(p\)-dimensional variety \(Y\) with \(\dim Y = p\), and every ample line bundle \(A\) on \(Y\),  there is a real number $C>0$ such that, for  every non-degenerate rational map $f:Y\dashrightarrow X$ whose image is not included in \(\Delta\), the following inequality is satisfied
 \[
 f^{*}L \cdot A^{p-1} \leq   C.
 \]

 Following Lang's terminology as before,  we will say that $X$ is \emph{pseudo-$p$-algebraically bounded} if there is a proper closed subset $\Delta \subsetneq X$ such that $X$ is $p$-algebraically bounded modulo $\Delta$. Also, we say that $X$ is \emph{$p$-algebraically bounded} if it is $p$-algebraically bounded modulo the empty subset. 
 With this terminology at hand, a projective variety is $1$-algebraically bounded over $k$ if and only if it is bounded in the sense of \cite[\S 10]{JBook}.
 \end{definition}
 
 \begin{remark} Voisin   introduced  an algebraic analogue of (analytic) measure-hyperbolicity; see \cite[Definition~2.20]{VoisinKoba}. Indeed, she defines a variety $X$ to be  \emph{algebraically measure hyperbolic}  if, for every ample line bundle $L$ on $X$, there exists a constant $A>0$ such that, for any covering family of curves $\pi: \mathcal{C} \to B$ (with generic fiber $C$ of genus $g$)  and every dominant map $\phi: \mathcal{C} \to X$ non-constant on the fibers, one has $2g-2\geq A\cdot \deg \phi_C^*L.$  Conjecturally, if $X$ is a smooth projective variety, then $X$ is $\dim X$-algebraically hyperbolic if and only if it is  algebraically measure hyperbolic (in Voisin's sense).    For example, by   \cite[Lemma~2.19]{VoisinKoba}, a variety of general type is algebraically measure-hyperbolic, and in this paper we prove the similar (a priori different) statement that a variety of general type is  $\dim X$-algebraically hyperbolic; see Theorem \ref{thm:1} in the case \(p=\dim(X)\).
\end{remark}

 The notion of intermediate boundedness is  weaker than the notion of intermediate algebraic hyperbolicity (see Proposition \ref{prop: hyp=>bound}), but we conjecture that it is equivalent (see Section \ref{section:lang}). Both notions allow for  strings of implications (see Proposition \ref{prop: string bound} and \ref{prop: string hyp} for precise statements). However, we emphasize that for intermediate algebraic boundedness this statement is only proven  under the additional assumption that the base field \(k\) is \textsl{uncountable}. 
 
 These  two intermediate notions  of pseudo-hyperbolicity turn out to be strong enough to force certain finiteness properties, as we show now.

\subsection{Kobayashi-Ochiai's finiteness theorem}\label{results2}
For our final result,  our starting point is Kobayashi-Ochiai's finiteness theorem for varieties of general type. Namely,   if $X$ is a projective variety of general type over $\CC$  and $Y$ is a projective variety, then the set of dominant rational  maps $Y\dashrightarrow X$ is finite.  As Lang's ``intermediate'' conjectures  predict that a pseudo $p$ algebraically bounded projective variety is of general type, our following result is in accordance with Lang's conjectures.  
 
\begin{theorem}\label{thm:2} Let $p\geq 1$ be an integer and let $X$ be a projective pseudo \(p\)-algebraically bounded variety over \(\CC\). Then the group of birational automorphisms $\mathrm{Bir}(X)$ is finite and for every projective variety $Y$, the set $\mathrm{Sur}(Y,X)$ of surjective morphisms $Y\to X$  is finite.
\end{theorem}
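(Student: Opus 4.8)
The plan is to deduce both finiteness statements from a single quantitative consequence of boundedness together with two qualitative ones — that $X$ is neither uniruled nor carries infinitesimal symmetries — and then to combine these with standard rigidity and finite-type arguments. First I record the degree estimate in the form I will use. After replacing $p$ by any larger integer (legitimate over $\CC$ by the string of implications, Proposition~\ref{prop: string bound}) and, if necessary, crossing the source with an auxiliary factor to reach the required dimension, Definition~\ref{def:bounded} yields: for every fixed smooth projective variety $W$, every ample $L$ on $X$ and every ample $A$ on $W$, there is a constant bounding $(f^{*}L)\cdot A^{\dim W-1}$ uniformly over all \emph{dominant} rational maps $f\colon W\dashrightarrow X$. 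The key point is that a dominant map has image $X\not\subset\Delta$, so the exceptional locus $\Delta$ never interferes.

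Second, I would use this estimate to show that $X$ is not uniruled and that $\Aut^0(X)=0$. If $X$ were uniruled, a covering family of rational curves provides a dominant rational map $\PP^1\times S\dashrightarrow X$ with $\dim S=\dim X-1$; precomposing with the degree-$d$ self-map $(z,s)\mapsto(z^{d},s)$ of the \emph{fixed} source $\PP^1\times S$ produces dominant maps whose $L$-degree grows linearly in $d$, contradicting the estimate. Likewise, a nontrivial $\Aut^0(X)$ contains a copy of $\Ga$, $\Gm$ (forcing uniruledness, already excluded) or a positive-dimensional abelian variety $B$; in the last case the multiplication maps $[d]\colon B\to B$ feed into the action $B\times X\to X$ to yield dominant maps from a fixed source of $L$-degree growing like $d^{2}$, again contradicting the estimate.

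Third, with these structural facts the two finiteness assertions follow. For $\Sur(Y,X)$: the degree estimate bounds the degree of the graphs $\Gamma_f\subset Y\times X$, so $\Sur(Y,X)$ is parametrized by a scheme of finite type; since $X$ is not uniruled, every connected component of this parameter space is an $\Aut^0(X)$-orbit (rigidity of surjective morphisms onto non-uniruled targets), and $\Aut^0(X)=0$ forces the components to be reduced points, whence finiteness. For $\Bir(X)$: applying the degree estimate to a fixed resolution $\widetilde{X}\to X$ shows that every $\phi\in\Bir(X)$ has bounded degree, so $\Bir(X)$ is of finite type; were it infinite it would contain a positive-dimensional algebraic subgroup, which after regularization acts biregularly on a smooth model $X'$ of $X$ and gives $\Aut^0(X')\neq0$ — impossible, since $X'$ inherits the same degree estimates and hence $\Aut^0(X')=0$ by the second step.

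The main obstacle is the second step, i.e.\ converting the merely numerical degree bound into the genuinely geometric conclusions ``not uniruled'' and ``$\Aut^0=0$'': the degree bound alone is consistent with a fixed positive-dimensional family of maps, so one must manufacture \emph{new} maps of unbounded degree (via the degree-raising self-covers and multiplication maps above) in order to reach a contradiction. The accompanying technical inputs — that bounded-degree subgroups of $\Bir(X)$ are algebraic and regularizable, and the rigidity of surjective morphisms onto non-uniruled varieties modulo $\Aut^0$ — are where the argument draws on external structure theory; checking that $p$-algebraic boundedness is inherited by the birational model $X'$, so that the second step applies to it, is the remaining point requiring care.
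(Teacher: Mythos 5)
Your architecture is essentially the paper's: reduce to $\dim(X)$-algebraic boundedness via the string of implications (Proposition \ref{prop: string bound}, available over $\CC$), get finite-typeness of the parameter spaces from the degree bounds (Lemma \ref{lem:boundedness_of_Sur}), and kill uniruledness and $\Aut^0$ by producing dominant maps of unbounded degree from a fixed smooth source via power maps and multiplication on the abelian part (Lemmas \ref{lem:non_uni} and \ref{lem:discreteness}). Up to that point the proposal is sound.

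There is, however, a genuine gap in your step for $\Sur(Y,X)$. The rigidity statement you invoke --- that for a non-uniruled target every connected component of the space of surjections $Y\to X$ is a single $\Aut^0(X)$-orbit --- is false. Take $A$ an abelian surface and $X=A/\{\pm 1\}$: this $X$ is normal and non-uniruled, and $\Aut^0(X)$ is trivial (only $2$-torsion translations descend, and $\Aut^0$ injects into $\Aut^0$ of the Kummer K3 resolution, which is trivial), yet the surjections $f_a=f\circ t_a\colon A\to X$ for $a\in A$ form a positive-dimensional connected family of pairwise distinct morphisms. This $X$ is not $\dim(X)$-algebraically bounded, so it does not contradict the theorem, but it does refute the rigidity statement in the form you use it. The correct statement (Hwang--Kebekus--Peternell, used in the paper's Theorem \ref{thm:sur}) is that $f$ factors as $Y\to Z\to X$ with $Z\to X$ \emph{finite surjective} and with $\Aut^0(Z)$ surjecting onto the component of $f$. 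Closing the argument therefore requires the additional observation that $\dim(X)$-algebraic boundedness is inherited by finite surjective covers (the paper's Lemma \ref{lem:covers}), so that it is $\Aut^0(Z)$, not merely $\Aut^0(X)$, that vanishes. As written, your deduction ``$\Aut^0(X)=0$ forces the components to be points'' does not follow.

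For $\Bir(X)$ your route genuinely differs from the paper's: you bound degrees through a resolution and then appeal to regularization of a putative positive-dimensional bounded-degree subgroup, whereas the paper passes to a pseudo-minimal model (Prokhorov--Shramov) and uses Hanamura's group-scheme structure on $\Bir$ with $\Bir^0\cong\Aut^0$. Your route can be made to work, but the two facts you defer to ``external structure theory'' --- that an infinite bounded-degree group of birational self-maps contains a positive-dimensional connected algebraic subgroup acting rationally, and the Weil-type regularization of that subgroup on a smooth model --- are exactly the content that Hanamura's theorem packages; composition is not a morphism on the naive parameter scheme, so these need a precise reference rather than a gesture. The remaining check you rightly flag, that $\dim(X)$-algebraic boundedness is a birational invariant, is needed on either route (the paper also asserts it without proof in Proposition \ref{prop:bir}).
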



We can use Theorem \ref{thm:1} and Theorem \ref{thm:2} to reprove part of Kobayashi-Ochiai's finiteness theorem. Indeed, if $X$ is a projective variety of general type, then $X$ is $\dim(X)$ algebraically hyperbolic by Theorem \ref{thm:1} (since asking for $\bigwedge^{\dim(X)} \Omega_{X}$ to be ample modulo some proper closed subset $\Delta$ is equivalent to $X$ being of general type), hence \(\dim(X)\) algebraically bounded by Proposition \ref{prop: hyp=>bound}. In particular, by Theorem \ref{thm:2}, we deduce that $\mathrm{Bir}(X)$ is finite and $\mathrm{Sur}(Y,X)$ is finite for every projective variety $Y$. (Note that we do not obtain the finiteness of the set of dominant rational maps from $Y$ to $X$.)

Theorem \ref{thm:2} provides an ``intermediate'' version of the finiteness of $\Aut(X)$  and $\mathrm{Bir}(X)$ proven for a  pseudo-bounded projective variety $X$ in \cite{JKa, JXie}. Also, it generalizes  Matsumura's finiteness theorem that a variety of general type has only finitely many automorphisms, and provides algebraic analogues of arithmetic finiteness results proven in \cite{JAut}.

Kobayashi asked about analytic analogues of Matsumura's finiteness theorem for $\mathrm{Aut}(X)$.  In fact,  in \cite{KobayashiQuestion} Kobayashi  asked whether a $\dim(X)$ analytically hyperbolic projective variety has only finitely many automorphisms. The answer is expected to be positive, as such a variety is expected to be of general type.  For recent progress on Kobayashi's problem we refer the reader to \cite{Etesse}. Our  result (Theorem \ref{thm:2}) can be interpreted as providing a positive answer to the algebraic analogue of Kobayashi's question.

\subsection{Outline of paper}  In Section \ref{section:lang}, we  state conjectures relating several intermediate notions of hyperbolicity. We take the opportunity to   introduce several  \emph{new} arithmetic notions of intermediate hyperbolicity which will be the topic of future works. In Section \ref{section: basics}, we state and prove basic results concerning intermediate algebraic hyperbolicity and intermediate algebraic boundedness. In particular, our definitions and terminologies are (\`a posteriori) motivated and explained by Proposition \ref{prop: bound Hil}.
In Section \ref{section: ample to algebraic}, we prove our criteria ensuring pseudo \(p\)-algebraic hyperbolicity (Theorem \ref{thm:1}).
Finally, in Section \ref{section: finiteness}, we prove our finiteness results (Theorem \ref{thm:2}) for pseudo \(p\)-algebraically bounded varieties. 

\begin{ack} The second named author thanks Junyi Xie for explaining Hanamura's work on birational self-maps (Section \ref{section:hana}).
The second named author is grateful to the IHES and the University of Paris-Saclay for their hospitality. Part of this work was done during the visit of the three authors at the Freiburg Institute for Advanced Studies; they thank the Institute for providing an excellent working environment.
\end{ack}

\begin{con}
Throughout this paper, we will let \(k\) denote an algebraically closed field of characteristic zero. A \emph{variety over $k$} is a finite type separated integral (i.e. irreducible and reduced) scheme over \(k\).  Unless stated otherwise explicitly  (e.g. if we want to restrict ourselves to complex projective varieties as in Theorem \ref{thm:2}), all our varieties will be defined over \(k\).
\end{con}

 \section{Lang's intermediate algebraic and arithmetic conjectures}
 \label{section:lang}

In this section, we pursue the intermediate $p$-hyperbolicity analogues of Lang's conjectures in the algebraic and arithmetic setting (thereby leaving out the complex-analytic analogues which are discussed in \cite{Etesse}).  We will build on Lang's original conjectures \cite{Lang2} and the extensions of his conjectures summarized in \cite[\S 12]{JBook}.  

Throughout this section, let $k$ be an algebraically closed field of characteristic zero.
Given a proper scheme $X$ over $k$, we refer to \cite[\S 7]{JBook} for the definition of pseudo-Mordellicity, to \cite[\S9]{JBook} for the definition of   pseudo-algebraic hyperbolicity, and to \cite[\S 10]{JBook} for the definition of pseudo-boundedness.
The notions of $p$-algebraic hyperbolicity and $p$-algebraic boundedness are defined in the introduction (see Definition \ref{def:alg} and Definition \ref{def:bounded}). To state the general conjecture for varieties of general type, we will need one additional definition. To state this definition, we refer to \cite[\S3]{JBook} for the notion of a model.

\begin{definition}[$p$-Mordellicity modulo $\Delta$]
 Let $p\geq 0$ be an integer, let $X$ be a  proper variety over $k$, and let $\Delta\subset X$ be a closed subset.
Then, we   say that $X$ is \emph{$p$-Mordellic modulo $\Delta$ over \(k\)} if, for every finitely generated subfield $K\subset k$, every model $\mathcal{X}$ for $X$ over $K$, and every $p$-dimensional smooth projective geometrically connected variety $Y$ over $K$, the set of non-degenerate rational maps \(f:Y\dashrightarrow \mathcal{X}\) whose images are not included in \(\Delta\) is finite.
\end{definition}

\begin{definition} Let $p\geq 0$. A proper variety $X$ over $k$ is \emph{$p$-Mordellic over \(k\)} if $X$ is $p$-Mordellic modulo the empty subset over \(k\).
\end{definition}

\begin{definition} Let $p\geq 0$. A proper variety $X$ over $k$ is \emph{pseudo-$p$-Mordellic over \(k\)} if there is a proper closed subset $\Delta\subsetneq X$ such that $X$ is  $p$-Mordellic modulo $\Delta$ over \(k\).
\end{definition}

Note that $X$ is Mordellic over \(k\) (as defined in \cite[\S3]{JBook}) if and only if $X$ is $0$-Mordellic  modulo the empty subset over \(k\). Indeed, a $0$-dimensional smooth projective geometrically connected variety $Y$ over $K$ is isomorphic to $\Spec K$, so that the set  of morphisms $Y\to \mathcal{X}$ equals the set of $K$-rational points of $\mathcal{X}$.
Note also that the fact that we allow for $p=0$ in the above definition is an artifact of the arithmetic setting; finiteness of ``points'' is  a reasonable property to impose (and study) over finitely generated fields of characteristic zero.

Let $X$ be a projective variety over \(k\), and let $\Delta\subset X$ be a closed subset.   Assume that for every algebraically closed field extension $L/k$ of finite transcendence degree, the variety $X_L$  is Mordellic modulo $\Delta_L$ over $L$. Then, for every \(p\), the variety $X$ is \(p\)-Mordellic modulo $\Delta$ over \(k\). Indeed, the set of rational maps $f:Y\dashrightarrow \mathcal{X}$ with $f(Y)\not\subset \Delta$ equals the set of $K(Y)$-rational points of $\mathcal{X}\setminus \Delta$, and the latter is finite by the Mordellicity assumption on the varieties $X_L$.

For varieties of general type, we expect all notions (including the intermediate ones)  of pseudo-hyperbolicity to coincide.   The following conjecture provides a precise statement.

 \begin{conjecture}[Lang's intermediate pseudo-conjectures]\label{conj:measure}  Let $X$ be a projective variety over $k$, and let $p\geq 1$ be an integer. Then the following are equivalent.
 \begin{enumerate}
  \item The variety $X$ is of general type.
\item There is a proper closed subset $\Delta\subsetneq X$ such that every subvariety $Y\subset X$ of dimension at least $p$ with $Y\not\subset \Delta$ is of general type.
 \item The variety $X$ is pseudo-Mordellic over $k$.
 \item The variety $X$ is pseudo-$p$-Mordellic over $k$.
 \item The variety $X$ is pseudo-algebraically hyperbolic over $k$.
 \item The variety $X$ is pseudo-$p$-algebraically hyperbolic over $k$.
 \item The variety $X$ is pseudo-bounded over $k$.
 \item The variety $X$ is pseudo-$p$-algebraically bounded over $k$.
 \item The variety $X$ is $\dim(X)$-Mordellic over $k$.
 \item The variety  $X$ is $\dim(X)$-algebraically-hyperbolic over $k$.
 \item The variety $X$ is $\dim(X)$-algebraically-bounded over $k$.
 \end{enumerate}
 \end{conjecture}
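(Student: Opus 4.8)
The plan is to split the eleven conditions into a \emph{general-type} column $(1),(2)$, an \emph{algebraic} column $(5),(6),(7),(8),(10),(11)$ and an \emph{arithmetic} column $(3),(4),(9)$, to prove every implication that the paper's positivity-and-string machinery delivers, and then to isolate precisely the implications that constitute Lang's conjecture proper. The first thing I would record is a structural simplification at the top level $p=\dim(X)$: if $Y$ is smooth projective of dimension $\dim(X)$ and $f\colon Y\dashrightarrow X$ is non-degenerate, then $\dim f(Y)=\min(\dim Y,\dim X)=\dim X$, so $f(Y)=X$ and in particular $f(Y)\not\subset\Delta$ for every proper closed subset $\Delta\subsetneq X$. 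Hence at $p=\dim(X)$ the conditions ``modulo $\Delta$'' and ``modulo $\emptyset$'' coincide, so that pseudo-$\dim(X)$-Mordellicity, pseudo-$\dim(X)$-algebraic hyperbolicity and pseudo-$\dim(X)$-algebraic boundedness collapse onto $(9)$, $(10)$ and $(11)$. This collapse is what glues the pseudo-notions to the plain $\dim(X)$-notions.

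With this observation, the provable arrows all flow in one direction, toward the weakest ($\dim(X)$-level) conditions. In the algebraic column, the string implications (Propositions~\ref{prop: string hyp} and \ref{prop: string bound}) propagate hyperbolicity and boundedness upward in $p$, giving $(5)\Rightarrow(6)\Rightarrow\cdots\Rightarrow(10)$ and $(7)\Rightarrow(8)\Rightarrow\cdots\Rightarrow(11)$ — the boundedness string only under the standing hypothesis that $k$ is uncountable, which I would flag — while Proposition~\ref{prop: hyp=>bound} furnishes the cross-arrows $(5)\Rightarrow(7)$, $(6)\Rightarrow(8)$ and $(10)\Rightarrow(11)$ from hyperbolicity to boundedness. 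The top of this flow is fed by general type: since $X$ is of general type exactly when $\bigwedge^{\dim X}\Omega^1_X=\omega_X$ is ample modulo some proper closed subset, Theorem~\ref{thm:1} at $p=\dim(X)$ combined with the collapse gives $(1)\Rightarrow(10)$. The tautological arrow $(2)\Rightarrow(1)$ (take $Y=X$, which has dimension $\ge p$ and is not contained in the proper closed $\Delta$) closes the elementary part. For the arithmetic column I would invoke the base-change remark recorded just before the conjecture — if $X_L$ is Mordellic modulo $\Delta_L$ for every finitely generated extension, then $X$ is $p$-Mordellic modulo $\Delta$ for every $p$ — which, granting that pseudo-Mordellicity is stable under base change, threads $(3)$ to $(4)$ and, via the collapse, to $(9)$.

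The main obstacle is every arrow that runs \emph{back up}, recovering a stronger property — and ultimately \emph{general type} — from a weaker hyperbolicity, boundedness, or finiteness property: the converses $(10)\Rightarrow(1)$, $(11)\Rightarrow(1)$, $(9)\Rightarrow(1)$, the upward algebraic implications such as $(10)\Rightarrow(5)$, and Lang's original predictions $(1)\Rightarrow(3)$ (general type implies pseudo-Mordellicity) and $(1)\Rightarrow(2)$ (the exceptional-locus structure of varieties of general type). None of these is within reach of the tools used here: the positivity-and-string machinery only ever flows from strong positivity of $\bigwedge^p\Omega^1_X$ toward hyperbolicity and then toward boundedness, and never in reverse, and the passage from arithmetic finiteness to general type is exactly the content of the Green--Griffiths--Lang and Lang--Vojta conjectures, open in every dimension. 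I therefore would not attempt a complete proof; the realistic deliverable is the cluster of equivalences generated by the arrows above — already enough to feed the finiteness statement of Theorem~\ref{thm:2} — with the remaining, backward implications stated as the genuinely open content of the conjecture.
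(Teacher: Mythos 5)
This statement is a conjecture; the paper gives no proof of it, only a remark cataloguing which implications are known, and your proposal reproduces that catalogue essentially verbatim: the tautological $(2)\Rightarrow(1)$, the downward flow $(1)\Rightarrow(10)\Rightarrow(11)$ via Theorem~\ref{thm:1} and Proposition~\ref{prop: hyp=>bound}, the strings of Propositions~\ref{prop: string hyp} and~\ref{prop: string bound} (with the uncountability caveat correctly flagged for the boundedness string), and the identification of every backward arrow --- from hyperbolicity, boundedness, or Mordellicity back to general type --- as the genuinely open content. Your two additions beyond the paper's remark are both sound and worth making explicit: the collapse of the pseudo-notions onto the plain notions at $p=\dim(X)$, because a non-degenerate rational map from a $\dim(X)$-dimensional source is dominant and hence has image contained in no proper closed subset; and the caveat that threading $(3)$ to $(4)$ and $(9)$ via the base-change remark requires knowing that Mordellicity modulo $\Delta$ persists over all finitely generated extensions, which is not part of the definition of $(3)$.
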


 Note that (1) is independent of $p$, so that part of this conjecture is reduntant. For example, $(5)$ is equivalent to $(6)$ with $p=1$. Nonetheless, we chose to present the conjecture in this way to facilitate discussing known results in the following remark.
 
\begin{remark}[What do we know about Conjecture \ref{conj:measure} ?] The following statements hold.
\begin{enumerate}
\item Obviously, $(2)\implies (1)$.
\item   If $\dim X =1$, then Conjecture \ref{conj:measure} holds by Faltings's proof of Mordell's conjecture and the classical finiteness theorem of De Franchis-Severi for Riemann surfaces. More generally, if $X$ is a closed subvariety of an abelian variety, then conjecture \ref{conj:measure} holds  by Faltings's proof of Mordell-Lang  \cite{FaltingsLang} and the work of Ueno, Bloch-Ochiai-Kawamata \cite{Kawamata} and Yamanoi  \cite{Yamanoi} on closed subvarieties of abelian varieties. 
\item If $\dim X = 2$, then $(3)\implies (2)$, $(5)\implies (2)$, and $(7)\implies (2)$. This is explained in \cite{JBook}.
\item  By \cite{vBJK}, we have that $(5)\implies (7)$.
\item  In this paper we prove $(6)\implies (8)$ (Proposition \ref{prop: hyp=>bound}), and thus $(10)\implies (11)$. We also show that $(1)\implies (10)$ (and thus $(1)\implies (11)$). See Theorem \ref{thm:1}.  
\item Assuming $k$ is uncountable, we show that $(8)\implies (11)$; see Proposition \ref{prop: string bound}.
\item We show that, if $\Lambda^p\Omega^1_X$ is ample modulo some proper closed subset, then $X$ satisfies $(1)$, $(2)$, $(6)$, $(8)$, $(10)$, and $(11)$ (see Theorem \ref{thm:1}).
\item  We show   that, assuming $X$ satisfies $(10)$ or $(11)$, then $\mathrm{Bir}_k(X)$ is finite and $\mathrm{Sur}_k(Y,X)$ is finite for every $Y$ (see Theorem \ref{thm:2}).
\end{enumerate}
\end{remark}

Part of the above conjecture already appears in Lang's original paper. For example, the equivalence of $(1)$, and $(3)$ is stated explicitly in \cite{Lang2}. Moreover,  the equivalence of $(1)$, $(5)$, and $(7)$ is implicit in Lang's original conjectures (see \cite[\S 12]{JBook}). However, the other conjectured equivalences are new.

To conclude this section,  we push Lang's conjectures further, and state the general conjecture for ``intermediate'' exceptional loci.

\begin{conjecture}[Lang's intermediate conjectures for exceptional loci]\label{conj:measure_2} Let $X$ be a projective variety over $k$, let $\Delta\subset X$ be a closed subset, and let $p$ be a positive integer. Then the following statements are equivalent.
\begin{enumerate}
\item Every subvariety $Y$ of $X$ of dimension at least $p$ with $Y\not\subset \Delta$ is of general type.
\item The variety $X$ is Mordellic modulo $\Delta$ over $k$.
\item The variety $X$ is $p$-Mordellic modulo $\Delta$ over $k$.
\item The variety $X$ is   $p$-algebraically hyperbolic modulo $\Delta$ over $k$.
\item The variety $X$ is $p$-algebraically bounded modulo $\Delta$ over $k$.
\end{enumerate}
\end{conjecture}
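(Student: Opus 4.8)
\emph{Strategy.} Since condition (1) is purely geometric and feeds into both the arithmetic block (2)--(3) and the algebraic block (4)--(5), I would establish the equivalence through two cycles that pivot on (1): an \emph{arithmetic cycle} $(1)\Rightarrow(2)\Rightarrow(3)\Rightarrow(1)$ and an \emph{algebraic cycle} $(1)\Rightarrow(4)\Rightarrow(5)\Rightarrow(1)$. It is clarifying to sort the six implications into three classes: the two \emph{descending} implications $(2)\Rightarrow(3)$ and $(4)\Rightarrow(5)$, which should be provable unconditionally with the tools of this paper; the two \emph{converse} implications $(3)\Rightarrow(1)$ and $(5)\Rightarrow(1)$, which would follow from the structure theory of non--general-type varieties; and the two \emph{ascending} implications $(1)\Rightarrow(2)$ and $(1)\Rightarrow(4)$ out of the general-type hypothesis, which I expect to carry all the genuine difficulty.

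\emph{The descending implications.} For $(4)\Rightarrow(5)$ I would follow the proof of Proposition \ref{prop: hyp=>bound} (which gives $(6)\Rightarrow(8)$ of Conjecture \ref{conj:measure}) verbatim in the relative setting modulo $\Delta$. When $Y$ carries an ample canonical bundle the argument is elementary: fixing an ample $A$ on $Y$ and choosing $m$ with $m\,\omega_Y-A$ nef, one uses that the pullback class $f^{*}L$ pairs nonnegatively against products of nef classes to obtain
\[
f^{*}L\cdot A^{p-1}\ \le\ m^{p-1}\,f^{*}L\cdot\omega_Y^{\,p-1}\ \le\ m^{p-1}\alpha\,\omega_Y^{\,p},
\]
the last inequality being $p$-algebraic hyperbolicity modulo $\Delta$. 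For an arbitrary $Y$ one passes to the boundedness of the relevant components of the Hilbert scheme as in Proposition \ref{prop: bound Hil}. For $(2)\Rightarrow(3)$ I would use the function-field reformulation already recorded just before the statement of the conjecture: a non-degenerate rational map $f:Y\dashrightarrow\mathcal X$ with $f(Y)\not\subset\Delta$ is the same datum as a $K(Y)$-rational point of $\mathcal X\setminus\Delta$, so finiteness of such maps is exactly Mordellicity modulo $\Delta$ of $X$ over the finitely generated field $K(Y)$, which is furnished by (2) (understood, as in \cite{JBook}, with its geometric content over every algebraically closed extension of finite transcendence degree).

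\emph{The converse implications.} For $(3)\Rightarrow(1)$ and $(5)\Rightarrow(1)$ I would argue by contraposition from the structure of non--general-type varieties. If some subvariety $Z\subset X$ of dimension $\ge p$ with $Z\not\subset\Delta$ fails to be of general type, then $Z$ is ``special'': the fibers of its Iitaka fibration are non--general-type, equivalently $Z$ carries a covering family of non--general-type subvarieties. Such a family produces, over a suitable finitely generated field, an infinite set of non-degenerate rational maps from a fixed $p$-dimensional $Y$ with image in $Z\not\subset\Delta$, and simultaneously an $A$-unbounded family of such maps; this contradicts $p$-Mordellicity modulo $\Delta$ in the first case and $p$-algebraic boundedness modulo $\Delta$ in the second. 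Converting ``not of general type'' into an explicit such family is unconditional only in low dimension and for subvarieties of abelian varieties, via \cite{Kawamata, Yamanoi, FaltingsLang}; in general it rests on the same circle of Lang-type conjectures.

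\emph{The ascending implications and the main obstacle.} The implications $(1)\Rightarrow(2)$ and $(1)\Rightarrow(4)$ are where the real content lies, and I expect them to be the main obstacle; they are respectively the arithmetic (Lang--Vojta) and the algebraic shadows of Lang's conjecture that a general-type variety is pseudo-hyperbolic. For $(1)\Rightarrow(2)$ the plan is to spread out and feed the general-type hypothesis on all positive-dimensional subvarieties into Vojta-type height inequalities; this is precisely the open Lang--Vojta conjecture, known only in the special cases recorded in the remark following Conjecture \ref{conj:measure}. For $(1)\Rightarrow(4)$ the natural route is to promote the general-type hypothesis to the positivity input of Theorem \ref{thm:1}, i.e.\ to show that general type of all dimension-$\ge p$ subvarieties forces $\bigwedge^p\Omega^1_X$ to be ample modulo some enlarged proper closed subset $\Delta'$, and then to conclude by Theorem \ref{thm:1}. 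This promotion is exactly the open point, since general type of subvarieties is a priori far weaker than ampleness of $\bigwedge^p\Omega^1_X$; closing the gap is an algebraic analogue of the Green--Griffiths--Lang conjecture. In summary, the descending implications and (granting the relevant structure theory) the converse implications are within reach, while $(1)\Rightarrow(2)$ and $(1)\Rightarrow(4)$ are equivalent in strength to outstanding conjectures and constitute the essential difficulty.
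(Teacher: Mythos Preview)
The statement you are addressing is a \emph{Conjecture} in the paper, not a theorem: the paper does not prove it and does not claim to. What the paper records (in the Remark immediately following Conjecture~\ref{conj:measure_2}) is a single unconditional implication, namely $(4)\Rightarrow(5)$ via Proposition~\ref{prop: hyp=>bound}, together with the observation that ampleness of $\bigwedge^p\Omega^1_X$ modulo $\Delta$ forces several of the conditions to hold simultaneously. There is therefore no proof in the paper for your proposal to be compared against; your document is better read as a survey of which implications are within reach and which are open, and on that reading it is largely accurate. In particular, your treatment of $(4)\Rightarrow(5)$ matches the paper exactly.

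Your own final paragraph already concedes the essential gap: you correctly identify $(1)\Rightarrow(2)$ as Lang--Vojta and $(1)\Rightarrow(4)$ as an algebraic Green--Griffiths--Lang statement, both wide open. The same caveat applies to your ``converse implications'' $(3)\Rightarrow(1)$ and $(5)\Rightarrow(1)$: extracting, from a single non--general-type subvariety $Z\not\subset\Delta$, an \emph{infinite} (respectively $A$-unbounded) family of non-degenerate rational maps from a \emph{fixed} $p$-dimensional $Y$ into $X$ is not a consequence of the Iitaka fibration alone; it is again a Lang-type statement. So four of your six arrows are conditional, which is consistent with the paper's presentation of the statement as conjectural.

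One technical point in the remaining ``easy'' direction deserves flagging. Your argument for $(2)\Rightarrow(3)$ identifies non-degenerate rational maps $Y\dashrightarrow\mathcal{X}$ with $K(Y)$-points of $\mathcal{X}\setminus\Delta$, but $K(Y)$ is a finitely generated extension of $K$, not a finitely generated subfield of $k$. Mordellicity modulo $\Delta$ \emph{over $k$}, as defined in the paper, only controls $K$-points for $K\subset k$ finitely generated. The paper itself (in the paragraph just before Conjecture~\ref{conj:measure}) derives $p$-Mordellicity only from the stronger hypothesis that $X_L$ is Mordellic modulo $\Delta_L$ over \emph{every} algebraically closed $L/k$ of finite transcendence degree. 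Your parenthetical gestures at this, but as the conjecture is literally stated, $(2)\Rightarrow(3)$ needs either that stronger reading of (2) or an independent persistence argument that Mordellicity over $k$ propagates to extensions.
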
 

Note that  Conjecture \ref{conj:measure_2} (for all $X$ and $p$) implies Conjecture \ref{conj:measure} (for all $X$ and $p$).

\begin{remark}[What do we know about this conjecture?]  By Proposition \ref{prop: hyp=>bound}, we have that $(3)\implies (4)$. We also show that, if $\Lambda^p \Omega_X$ is ample modulo $\Delta$, then $(1), (3)$, and $(4)$ hold; see Theorem \ref{thm:1}. 
\end{remark}

 \section{Algebraic hyperbolicity and algebraic boundedness}
 \label{section: basics}
In this section, we prove a few basic facts concerning intermediate algebraic hyperbolicity and algebraic boundedness, justifying in particular our definitions. Let us first record the relationship between intermediate algebraic hyperbolicity and intermediate algebraic boundedness in the following proposition:
\begin{proposition}
\label{prop: hyp=>bound}    Let \(X\) be a projective variety and let \(\Delta\subset X\) be a proper closed subset of \(X\). 
 If \(X\) is \(p\)-algebraically hyperbolic modulo \(\Delta\), then $X$ is $p$-algebraically bounded modulo \(\Delta\).
 \end{proposition}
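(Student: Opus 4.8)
The plan is to reduce the boundedness statement—which concerns an arbitrary fixed source $Y$ and an arbitrary ample class $A$ on it—to the hyperbolicity hypothesis, which only controls sources carrying an ample canonical bundle and only through $K_Y$. The bridge is a finite cover of $Y$ with ample canonical bundle. So fix an ample line bundle $L$ on $X$, a smooth projective $p$-dimensional variety $Y$, and an ample line bundle $A$ on $Y$, and let $\alpha=\alpha(X,\Delta,L)>0$ be the constant furnished by $p$-algebraic hyperbolicity modulo $\Delta$.

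First I would build a finite surjective morphism $\pi\colon Z\to Y$ from a smooth projective variety $Z$ of dimension $p$ such that $K_Z=\pi^*N$ for some \emph{ample} line bundle $N$ on $Y$. Concretely, fix a very ample $M$ on $Y$ and a prime $d$ large enough that $N:=K_Y\otimes M^{\otimes(d-1)}$ is ample; a general member $B\in|M^{\otimes d}|$ is smooth by Bertini's theorem, and the associated degree-$d$ cyclic cover $\pi\colon Z\to Y$ branched along $B$ is smooth and irreducible (irreducibility holds because $d$ is prime and $B$ is reduced), with $K_Z=\pi^*(K_Y\otimes M^{\otimes(d-1)})=\pi^*N$. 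In particular $\omega_Z$ is ample, so $Z$ is an admissible source for the hyperbolicity hypothesis.

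Next, for any non-degenerate rational map $f\colon Y\dashrightarrow X$ with $f(Y)\not\subset\Delta$, I would set $g:=f\circ\pi\colon Z\dashrightarrow X$. Since $\pi$ is finite and surjective, $g(Z)=f(Y)$, so $g$ is again non-degenerate with image not contained in $\Delta$. Applying $p$-algebraic hyperbolicity to $g$ gives $g^*L\cdot K_Z^{p-1}\leq\alpha\,K_Z^p$. Using $g^*L=\pi^*f^*L$, $K_Z=\pi^*N$, and the projection formula for the degree-$d$ map $\pi$ (each side picks up a factor $\deg\pi=d$, which cancels), this descends to the $f$-uniform inequality $f^*L\cdot N^{p-1}\leq\alpha\,N^p$ on $Y$.

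Finally I would trade $N$ for the prescribed ample class $A$. Choose an integer $m\geq1$ with $mN-A$ nef. Resolving $f$ by $\sigma\colon W\to Y$ with $\tilde f=f\circ\sigma\colon W\to X$ a morphism, one has $f^*L\cdot A^{p-1}=\tilde f^*L\cdot(\sigma^*A)^{p-1}$ and $f^*L\cdot N^{p-1}=\tilde f^*L\cdot(\sigma^*N)^{p-1}$, since the $\sigma$-exceptional correction to $\sigma^*f^*L$ pushes forward to $0$. As $\tilde f^*L$, $\sigma^*A$, $\sigma^*N$ and $\sigma^*(mN-A)$ are all nef, monotonicity of products of nef classes yields $f^*L\cdot A^{p-1}\leq m^{p-1}\,f^*L\cdot N^{p-1}\leq m^{p-1}\alpha\,N^p=:C$. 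Since $N$, $m$ and $\alpha$ depend only on $X,\Delta,L,Y,A$ and not on $f$, this $C$ witnesses $p$-algebraic boundedness modulo $\Delta$. The one genuinely substantive point—and the step I would be most careful about—is the construction in the second paragraph: producing a finite cover with ample canonical bundle that is itself a (smooth, irreducible) variety, so that the hyperbolicity hypothesis applies to $Z$ with its $Y$-independent constant $\alpha$; everything else is projection-formula and nef-cone bookkeeping.
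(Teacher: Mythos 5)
Your proposal is correct and follows essentially the same route as the paper: pass to a finite ramified cover of $Y$ with ample canonical bundle (the paper simply cites the existence of such a cover, you construct it explicitly as a prime-degree cyclic cover with $K_Z=\pi^*N$), apply the hyperbolicity hypothesis to the composite map, descend via the projection formula, and trade the canonical class for the given ample $A$ by choosing a suitable multiple. The only differences are cosmetic --- you perform the comparison with $A$ downstairs on $Y$ using nefness, whereas the paper compares $\psi^*A$ with $K_{\tilde Y}$ upstairs using ampleness.
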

 \begin{proof}
 Let \(Y\) be a smooth projective variety of dimension \(p\), and consider a ramified cover \(\psi: \tilde{Y} \to Y\) with \(\tilde{Y}\) smooth and \(K_{\tilde{Y}}\) ample (see e.g. \cite{Lazzie1}). Denote \(R\) the ramification divisor so that one has the following equality of  divisors:
 \[
 K_{\tilde{Y}}
 =
 \psi^{*}K_{Y}
 +
 R.
 \]

Let \(f: Y \dashrightarrow X\) be non-degenerate rational map, whose image is not included in \(\Delta\). Fix \(L \to X\) an ample line bundle on \(X\) and \(A \to Y\) an ample line bundle on \(Y\). By hypothesis on \(X\), there exists a constant \(\alpha\) independent of \(f\) and \(Y\) such that the following inequality is satisfied
\[
(\psi \circ f)^{*}L \cdot K_{\tilde{Y}}^{p-1} 
\leq 
\alpha K_{\tilde{Y}}^{p}.
\]
Let \(m\) be an integer such that \(mK_{\tilde{Y}} - \psi^{*}A\) is ample: \(m\) does depend on \(Y\) but not on \(f\). Then one has the following inequality:
\[
(\psi \circ f)^{*}L \cdot \left(\psi^{*}A\right)^{p-1} 
\leq
\alpha m^{p-1} K_{\tilde{Y}}^{p}.
\]
Using the projection formula, one deduces that \(X\) is indeed \(p\)-algebraically bounded, as the bound on the right does not depend on the rational map \(f\).
 \end{proof}

Let us now prove that the notions of intermediate algebraic hyperbolicity and intermediate algebraic boundedness form a string of implications, starting with the case of algebraic boundedness:
 
 \begin{proposition}
 \label{prop: string bound}
 Assume that \(k\) is uncountable.
 Let \(X\) be a projective variety, let $\Delta\subset X$ be a closed subset, and let \(p \in \NN_{\geq 1}\). 
 If \(X\) is \(p\)-algebraically bounded  modulo \(\Delta\), then \(X\) is \((p+1)\)-algebraically bounded   modulo \(\Delta\).
 \end{proposition}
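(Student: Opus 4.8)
The plan is to argue by contradiction, reducing the $(p+1)$-dimensional case to the $p$-dimensional one by slicing $Y$ with general members of a very ample linear system, and then using the uncountability of $k$ to upgrade the individual bounds obtained on each slice into a single uniform bound.

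First I would suppose that $X$ is \emph{not} $(p+1)$-algebraically bounded modulo $\Delta$. Unwinding Definition \ref{def:bounded}, this produces an ample line bundle $L$ on $X$, a smooth projective variety $Y$ of dimension $p+1$, an ample line bundle $A$ on $Y$, and a sequence of non-degenerate rational maps $f_n \colon Y \dashrightarrow X$ with $f_n(Y)\not\subset\Delta$ and $d_n := f_n^*L\cdot A^p\to\infty$. Fixing $m$ large enough that $mA$ is very ample (so that $\dim|mA|>0$), I would consider the complete linear system $\mathbb{P}:=|mA|$, whose generic member is a smooth projective $p$-dimensional variety by Bertini. The key computation is that for a smooth $H\in|mA|$ meeting the indeterminacy locus of $f_n$ (which has codimension $\geq 2$ in $Y$) properly, the restriction $f_n|_H\colon H\dashrightarrow X$ is a well-defined rational map satisfying $(f_n|_H)^*L\cdot (A|_H)^{p-1} = f_n^*L\cdot A^{p-1}\cdot[H] = m\, d_n$, by the restriction (projection) formula together with $[H]=mA$.

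Next I would record, for each fixed $n$, that a general $H\in\mathbb{P}$ is \emph{admissible} for $f_n$: namely $H$ is smooth, meets the indeterminacy locus of $f_n$ in codimension $\geq 2$, is not contained in the proper closed subset $\overline{f_n^{-1}(\Delta)}$ (so that $f_n(H)\not\subset\Delta$), and satisfies $\dim f_n(H)=\min(\dim X,p)$ (so that $f_n|_H$ is again non-degenerate). Each of these is an open dense condition, so the locus $B_n\subset\mathbb{P}$ of non-admissible members is contained in a proper closed subset. Since $k$ is uncountable, $\mathbb{P}(k)$ is not a countable union of proper closed subsets, so I can choose a single $H\in\mathbb{P}(k)\setminus\bigcup_n B_n$ that is simultaneously admissible for every $f_n$. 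Applying the hypothesis that $X$ is $p$-algebraically bounded modulo $\Delta$ to this fixed $H$, with the ample line bundles $A|_H$ on $H$ and $L$ on $X$, yields one constant $C=C(X,\Delta,L,H,A|_H)$ with $(f_n|_H)^*L\cdot(A|_H)^{p-1}\leq C$ for all $n$. Combined with the identity above this gives $m\, d_n\leq C$ for all $n$, contradicting $d_n\to\infty$.

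The step I expect to be the main obstacle is verifying that the admissibility conditions genuinely hold for a general member of $|mA|$, independently for each $f_n$ — in particular that non-degeneracy is preserved under restriction. This requires splitting into the case $\dim X\geq p+1$ (where $f_n$ is generically finite onto its image, and a general hyperplane section of $Y$ restricts to a still generically finite map) and the case $\dim X\leq p$ (where $f_n$ is dominant with generic fibre of dimension $\geq 1$, and a general section still dominates), together with a Bertini-type argument ensuring that $f_n(H)$ escapes the fixed proper subset $\Delta$. Once these genericity statements are established, the uncountability of $k$ is precisely what converts the a priori $H$-dependent constants into the single uniform bound needed to conclude.
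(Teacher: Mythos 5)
Your proposal is correct and follows essentially the same route as the paper: argue by contradiction, use uncountability of $k$ to pick a single smooth member $H$ of an ample linear system that is simultaneously admissible (smooth, restriction non-degenerate, image not in $\Delta$) for the whole sequence $f_n$, and then contradict $p$-algebraic boundedness via $(f_n|_H)^*L\cdot (A|_H)^{p-1}= m\,f_n^*L\cdot A^{p}$. The only (welcome) refinements are that you spell out the admissibility conditions and pass explicitly to $|mA|$ with $mA$ very ample, whereas the paper simply takes $A$ very ample and works in $|A|$.
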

 \begin{proof}
 Fix \(L\) an ample line bundle on \(X\), \(Y\) a smooth projective variety of dimension \(p+1\) and \(A\) a very ample line bundle on \(Y\). 
 We argue by contradiction. Thus, suppose that there exists a sequence of non-degenerate rational maps \(f_{i}: Y \dashrightarrow X\) whose image are not included in \(\Delta\) such that $
 f_{i}^{*}L \cdot A^{p} $ tends to infinity as $i$ tends to infinity. 
As \(k\) is uncountable, one can choose a smooth ample divisor  \(H \subset Y\)  taken in the linear system \(|A|\) such that, for any \(i \in \NN_{\geq 1}\), the rational map \((f_{i})_{\vert H}: H \dashrightarrow X\) is a  well-defined non-degenerate map whose  image is not included in \(\Delta\).
 Since \(f_{i}^{*}L \cdot A^{p}=(f_{i})_{\vert H}^{*}L \cdot A_{\vert H}^{p-1}\), this contradicts the \(p\)-algebraic boundedness of \(X\).
 \end{proof}
 
Replacing ``algebraic boundedness`` by ``algebraic hyperbolicity``, we also have the following; note that the uncoutability hypothesis in the statement has been dropped.
\begin{proposition}
 \label{prop: string hyp}
Let \(X\) be a projective variety, let $\Delta\subset X$ be a closed subset, and let \(p \in \NN_{\geq 1}\).
 If \(X\) is \(p\)-algebraically hyperbolic modulo \(\Delta\), then \(X\) is \((p+1)\)-algebraically hyperbolic  modulo \(\Delta\).
\end{proposition}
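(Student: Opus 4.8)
The plan is to adapt the hyperplane-section argument of Proposition \ref{prop: string bound}, but to exploit the fact that the constant $\alpha$ provided by $p$-algebraic hyperbolicity is independent of the source variety. This is precisely what lets us treat one rational map at a time rather than a whole sequence, and thereby dispense with the uncountability hypothesis. So fix an ample line bundle $L$ on $X$, let $\alpha=\alpha(X,\Delta,L)$ be the constant from the $p$-algebraic hyperbolicity of $X$ modulo $\Delta$, let $Y$ be a smooth projective variety of dimension $p+1$ with $\omega_Y$ ample, and let $f\colon Y\dashrightarrow X$ be a non-degenerate rational map whose image is not contained in $\Delta$. We must bound $f^{*}L\cdot K_Y^{\,p}$ by $\alpha'\cdot K_Y^{\,p+1}$ for some $\alpha'=\alpha'(X,\Delta,L)$ not depending on $Y$.

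The first step is to produce a good pluricanonical divisor to cut with. By effective base-point-freeness (Angehrn--Siu; see e.g. \cite{Lazzie1}) there is an integer $m=m(p)$, depending only on $p$, such that $mK_Y$ is base-point-free for every smooth projective $(p+1)$-fold $Y$ with $\omega_Y$ ample; this uniformity in $Y$ is the crucial point. Applying Bertini to the base-point-free system $|mK_Y|$ (and using that $\phi_{|mK_Y|}$ is generically finite onto an image of dimension $p+1\ge 2$), a general member $H\in|mK_Y|$ is a smooth irreducible projective variety of dimension $p$. By adjunction $K_H=(K_Y+H)|_H=(m+1)\,K_Y|_H$, which is ample since $K_Y$ is; thus $H$ is again an admissible test variety. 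Moreover, for general $H$ the restriction $f_H:=f|_H\colon H\dashrightarrow X$ is well-defined (the indeterminacy locus of $f$ has codimension $\ge 2$), and, using that $f$ is non-degenerate with image not in $\Delta$, one checks that $f_H$ is again non-degenerate with image not contained in $\Delta$ (handling separately the cases $\dim X\ge p+1$, where $f$ is generically finite onto its image, and $\dim X\le p$, where $f$ is dominant). Only finitely many general-position conditions are imposed, so a single suitable $H$ exists because $k$ is infinite.

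The second step is the numerical comparison. Applying $p$-algebraic hyperbolicity to the admissible pair $(H,f_H)$ gives $f_H^{*}L\cdot K_H^{\,p-1}\le \alpha\,K_H^{\,p}$. Now substitute $K_H=(m+1)K_Y|_H$ and $f_H^{*}L=(f^{*}L)|_H$, and push the intersection numbers on $H$ down to $Y$ via the projection formula together with $[H]=mK_Y$. The two sides acquire the common factors $(m+1)^{p-1}m$ and $(m+1)^p m$ respectively, and after dividing by $(m+1)^{p-1}m>0$ one is left with $f^{*}L\cdot K_Y^{\,p}\le \alpha(m+1)\,K_Y^{\,p+1}$. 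Since $m$ depends only on $p$, the constant $\alpha'=(m+1)\alpha$ depends only on $(X,\Delta,L)$, which is exactly what is required.

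The main obstacle is ensuring that the final constant is independent of $Y$. Cutting by a pluricanonical divisor inevitably inflates the canonical class through adjunction (the term $H|_H$), producing a factor $(m+1)$; the difficulty is that a priori $m$ must be taken large enough for $mK_Y$ to be base-point-free, and this threshold varies with $Y$. The resolution, and the key input beyond the boundedness argument, is the uniform (dimension-only) effective base-point-freeness, which lets us fix one $m$ once and for all. This should be contrasted with Proposition \ref{prop: string bound}: there one cuts with the given fixed ample bundle $A$, so no adjunction factor appears, but the boundedness constant depends on the pair $(H,A|_H)$, forcing a single hyperplane to work for an entire sequence of maps and hence requiring $k$ uncountable. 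Here the hyperbolicity constant is already uniform in the source, so each map may be handled on its own and the uncountability hypothesis can be dropped.
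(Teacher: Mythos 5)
Your proof is correct and follows essentially the same route as the paper's: cut by a general member of $|mK_Y|$ with $m$ uniform in $Y$ by Angehrn--Siu, apply adjunction, and invoke $p$-algebraic hyperbolicity on the hyperplane section to get the constant $\alpha(m+1)$. Your additional remarks (checking irreducibility of $H$, verifying non-degeneracy of $f_H$ in both dimension regimes, and the comparison with the boundedness case) only make explicit details the paper leaves implicit.
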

\begin{proof}
Fix \(L \to X\) an ample line on \(X\), and suppose that \(X\) is \(p\)-algebraically hyperbolic modulo \(\Delta\). One must prove that there exists a constant \(\alpha\) such that for any smooth projective variety \(Y\) canonically polarized of dimension \((p+1)\) and any non-degenerate rational map \(f: Y \dashrightarrow X\) whose image is not included in \(\Delta\), one has the following inequality
\[
f^{*}L\cdot K_{Y}^{p}
\leq
\alpha K_{Y}^{p+1}.
\]
Fix \(f:Y \dashrightarrow X\) a rational map as above. By the work of Demailly and Angehrn-Siu (see e.g. \cite{Lazzie2}[10.2]), there exists a natural number \(m \in \NN\) \textsl{independent of \(Y\)} such that \(mK_{Y}\) is very ample. Take \(H \in |mK_{Y}|\) a smooth hypersurface such that \(f_{| H}: H \dashrightarrow X\) remains a non-degenerate rational map whose image is not included in \(\Delta\). By the adjunction formula, one has the following equality of linear equivalence classes of divisors
\[
K_{H}
=
(m+1)(K_{Y})_{\vert H}.
\]
Since \(X\) is \(p\)-algebraically hyperbolic modulo \(\Delta\), there exists a constant \(\beta\), independent of \(f_{\vert H}\) and \(H\), such that one has the following inequality 
\[
f_{\vert H}^{*}L \cdot K_{H}^{p-1}
\leq
\beta
K_{H}^{p}.
\]
This inequality can be rewritten as follows
\[
f^{*}L \cdot K_{Y}^{p}
\leq
\beta (m+1) K_{Y}^{p+1},
\]
with \(m\) and \(\beta\) independent of \(f\) and \(Y\), so that the proof is complete. 
\end{proof}
 
 We end this section with a classic yet fundamental application of the work of Koll\'ar and Matsusaka  \cite{KollarMat}, which justifies our definition and terminology of intermediate algebraic boundedness. Before so, let us recall the notion of Hilbert polynomials of rational maps.
Let  \(f: Y \dashrightarrow X\) be a rational map. Denote \(\Graph(f)\) the (closure of the) graph of $f$, and recall that a \emph{Hilbert polynomial of \(f\)} is obtained as follows. Fix \(A\) an ample line bundle on \(Y\) and \(L\) an ample line bundle on \(X\).  The \textsl{Hilbert polynomial of \(f\)}, computed with respect to the ample line bundles \(A \to Y\) and \(L \to X\), is by definition the Hilbert polynomial of the projective variety \(\Graph(f)\) computed with respect to the ample line bundle
\(
A \boxtimes L_{| \Graph(f)}
\).
This is the unique polynomial \(P_{f} \in \ZZ[X]\) such that for any \(m \in \NN\), \(m \gg 1\), the following equality is satisfied
\[
P_{f}(m)
=
\chi\left(\Graph(f), \left(A \boxtimes L_{| \Graph(f)}\right)^{m}\right)
=
\dim \mathrm{H}^{0}\left(\Graph(f), \left(A \boxtimes L_{| \Graph(f)}\right)^{m}\right),
\]
where the last equality follows from the ampleness of \(A \boxtimes L_{| \Graph(f)}\). 
In the case where the map \(f: Y \dashrightarrow X\) is actually a morphism, one easily sees that the Hilbert polynomial of \(f\) computed with respect to \(A \to Y\) and \(L \to X\) is also the Hilbert polynomial of \(Y\) computed with respect to the ample line bundle \(A\otimes f^{*}L\). This practical way of interpretating Hilbert polynomials of morphisms can be carried over for rational maps, provided one assumes \(Y\) to be smooth (locally factorial is actually enough):
\begin{lemma}
\label{lemma: Hilb pol}
Let \(Y, X\) be projective varieties, and suppose that \(Y\) is smooth. Let \(f: Y \dashrightarrow X\) be a rational map, and denote by \(P_{f}\) its Hilbert polynomial computed with respect to the ample line bundles \(A \to Y\) and \(L \to X\). Then for any \(m \gg 1\), one has the following equality:
\[
P_{f}(m)
=
\chi(Y, (A\otimes f^{*}L)^{m})
=
\dim \mathrm{H}^{0}(Y, (A\otimes f^{*}L)^{m}).
\]
\end{lemma}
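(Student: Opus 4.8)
The plan is to compute everything through the graph of $f$. Write $\Gamma=\Graph(f)\subset Y\times X$ for the closure of the graph, let $p_1\colon\Gamma\to Y$ and $p_2\colon\Gamma\to X$ be the two projections, and set $M:=(A\boxtimes L)|_\Gamma=p_1^*A\otimes p_2^*L$. As $A\boxtimes L$ is ample on $Y\times X$, its restriction $M$ is ample on $\Gamma$, so by Serre vanishing $P_f(m)=\chi(\Gamma,M^m)=\dim H^0(\Gamma,M^m)$ for all $m\gg1$; it therefore suffices to identify $H^0(\Gamma,M^m)$, and more precisely the full cohomology of $M^m$, with that of $(A\otimes f^*L)^m$ on $Y$.

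The morphism $p_1$ is birational and restricts to an isomorphism over the open set $U=Y\setminus F$ on which $f$ is defined; since $Y$ is smooth, $F$ has codimension at least $2$. The key structural point I would use is that $M$ is ample relative to $p_1$: the fibre of $p_1$ over a point $y$ is a closed subscheme of $\{y\}\times X\cong X$, and $M$ restricts to $L$ there, which is ample. Relative Serre vanishing then gives $R^jp_{1*}(M^m)=0$ for all $j\geq1$ once $m\gg1$, so the Leray spectral sequence degenerates and the projection formula yields
\[
H^i(\Gamma,M^m)=H^i\bigl(Y,\,A^m\otimes p_{1*}(p_2^*L^m)\bigr)\qquad\text{for all }i,\ m\gg1,
\]
and likewise $\chi(\Gamma,M^m)=\chi\bigl(Y,A^m\otimes p_{1*}(p_2^*L^m)\bigr)$. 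This reduces the whole statement to the sheaf-theoretic identity $p_{1*}(p_2^*L^m)=(f^*L)^m$ on $Y$.

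I expect this last identity to be the main obstacle, and it is where the smoothness (or local factoriality) of $Y$ must be decisive. Over $U$ the map $p_1$ is an isomorphism and $p_2$ agrees with $f$, so the coherent sheaf $p_{1*}(p_2^*L^m)$ and the line bundle $(f^*L)^m$ restrict to the same invertible sheaf on $U$; the content is to propagate this across the exceptional locus of $p_1$. My plan is to exploit that $Y$ is normal with $(p_1)_*\OO_\Gamma=\OO_Y$, that $(f^*L)^m$ is reflexive, and that $\codim_Y(Y\setminus U)\geq2$, so that a Hartogs-type extension identifies the reflexive hull of $p_{1*}(p_2^*L^m)$ with $(f^*L)^m$; the delicate part is to check that the positive-dimensional fibres of $p_1$ lying over $F$ neither kill nor create global sections, i.e.\ that $p_{1*}(p_2^*L^m)$ is already the line bundle $(f^*L)^m$ and not merely birational to it. Controlling these contracted fibres is, I believe, the crux of the proof.
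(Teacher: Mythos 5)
You have correctly isolated the crux, and your reduction is clean as far as it goes: $M=(A\boxtimes L)|_{\Graph(f)}$ is indeed $p_1$-ample, relative Serre vanishing plus the projection formula do reduce everything to the identity $p_{1*}(p_2^{*}L^{m})=(f^{*}L)^{m}$. But that identity is where the argument genuinely breaks, and no amount of care with reflexive hulls rescues it: one only has a natural inclusion $p_{1*}(p_2^{*}L^{m})\hookrightarrow (f^{*}L)^{m}$ which is an isomorphism over $Y\setminus F$ (hence an identification of reflexive hulls) but is in general strict, precisely because $p_1$ may contract divisors of $\Graph(f)$ onto $F$ on which $p_2^{*}L$ is nontrivial. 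Concretely, take $Y=\PP^{2}$, $X=\PP^{1}$, $f$ the projection from a point $y_0$, $A=\OO_{\PP^{2}}(1)$, $L=\OO_{\PP^{1}}(1)$. Then $\Graph(f)$ is the blow-up of $\PP^{2}$ at $y_0$, one has $f^{*}L=\OO_{\PP^{2}}(1)$ but $p_{1*}(p_2^{*}L^{m})=\mathcal{I}_{y_0}^{m}(m)$, whose reflexive hull is $\OO_{\PP^{2}}(m)$ yet which has strictly fewer sections. A direct computation gives $P_f(m)=\tfrac{1}{2}(3m^{2}+5m+2)$ while $\chi(Y,(A\otimes f^{*}L)^{m})=\chi(\OO_{\PP^{2}}(2m))=2m^{2}+3m+1$, so the asserted equality of the lemma itself fails here. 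What survives of your argument is the one-sided comparison $P_f(m)\leq \chi(Y,(A\otimes f^{*}L)^{m})$ for $m\gg 1$, coming from the injection $\mathrm{H}^{0}(\Graph(f),M^{m})\hookrightarrow \mathrm{H}^{0}(Y,(A\otimes f^{*}L)^{m})$ obtained by restricting to $p_1^{-1}(Y\setminus F)$ and extending across $F$.

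For comparison, the paper's own proof proceeds on the pullback side rather than the pushforward side, but founders on exactly the same rock: it asserts the equality of line bundles $\pi_{1}^{*}(A\otimes f^{*}L)=(A\boxtimes L)|_{\Graph(f)}$, which is the avatar of your identity and fails in the same example (the two sides differ there by $m$ times the exceptional divisor). So your instinct that the positive-dimensional fibres of $p_1$ over $F$ are the delicate point is exactly right; the honest conclusion is that they \emph{do} kill sections, only the inequality holds, and the lemma would need to be restated (or the definition of $P_f$ adjusted, e.g.\ to the Hilbert polynomial of $A\otimes f^{*}L$ on $Y$ itself) for the subsequent applications.
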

\begin{proof}
Recall that the pull-back by \(f\) of the line bundle  \(L\) is well-defined: since \(Y\) is smooth, the rational map \(f\) is  well-defined outside a closed subset \(F\) of codimension at least \(2\), and one  has the equality of the Picard groups \(\Pic(Y\setminus F)=\Pic(Y)\).
Let \(\pi_{1}: \Graph(f) \to Y\) (resp. \(\pi_{2}: \Graph(f) \to X\)) be the first (resp. second)  projection, and observe that the pull-back \(f^{*}L\) is nef. Indeed, let \(m \in \NN_{\geq 1}\) be such that \(L^{m}\) is globally generated, and let \(y \in Y\). Pick any \(x \in f(y)\bydef \pi_{2}(\pi_{1}^{-1}(y))\), and take \(s \in \mathrm{H}^{0}(X, L^{m})\) such that \(s(x) \neq 0\). Then the section \(f^{*}s \in \mathrm{H}^{0}(Y, f^{*}L^{m})\) induced by \(f\) does not vanish at \(y\). 
Therefore, the line bundle \(A\otimes f^{*}L\) is ample.
To conclude, it is now enough to show the equality 
\[
\dim \mathrm{H}^{0}(\Graph(f), \left(A\boxtimes L\right)^{m}_{\vert \Graph(f)})
=
\dim \mathrm{H}^{0}(Y, (A\otimes f^{*}L)^{m})
\] 
for any \(m \in \NN\).
Observe that one has the following equality of (isomorphism classes of) line bundles on \(\Graph(f)\):
 \[
 \pi_{1}^{*}(A\otimes f^{*}L)=\left(A\boxtimes L\right)_{\vert \Graph(f)}.
 \]
Therefore, there is a natural injective map of vector spaces induced by \(\pi_{1}\)
\[
\begin{array}{lll}
\mathrm{H}^{0}(Y, (A\otimes f^{*}L)^{m})
&
\longrightarrow 
&
\mathrm{H}^{0}(\Graph(f), \left(A\boxtimes L\right)^{m}_{\vert \Graph(f)})
\\
s 
&
\longmapsto
&
s \circ \pi_{1}
\end{array}.
\]
In the other direction, any global section \(\tilde{s} \in \mathrm{H}^{0}(\Graph(f), \left(A\boxtimes L\right)^{m}_{\vert \Graph(f)})\) induces a global section \(s \in \mathrm{H}^{0}(Y \setminus F, (A\otimes f^{*}L)^{m})\), and Riemann's extension theorem (or an algebraic variant of it) allows to conclude.
\end{proof}
The terminology  \textit{algebraic boundedness} now comes from the following classic application of the work of Koll\'ar and Matsusaka  \cite{KollarMat} (see \cite{Lazzie2}[Thm 6.3.29]):
 \begin{proposition}
 \label{prop: bound Hil}
 Let \(X\) be a projective variety and let \(\Delta\subset X\) be a proper closed subset of \(X\). 
Suppose that \(X\) is \(p\)-algebraically bounded modulo \(\Delta\). Then for any smooth projective variety \(Y\) of dimension \(p\), the coefficients of the Hilbert polynomials of the non-degenerate rational morphisms from \(Y\) to \(X\), whose image are not included in \(\Delta\), are uniformly bounded.
 \end{proposition}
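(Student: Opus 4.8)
The plan is to combine Lemma \ref{lemma: Hilb pol} with Hirzebruch--Riemann--Roch in order to reinterpret the coefficients of \(P_f\) as the values of \emph{fixed} polynomial functions evaluated at the numerical class \([f^{*}L]\), and then to prove that all the classes \([f^{*}L]\) that occur lie in a single compact subset of the finite-dimensional real vector space \(\mathrm{N}^1(Y)_{\RR}\). Once both ingredients are in place, uniform boundedness of the coefficients is immediate.

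First I would fix the ample line bundles \(A\) on \(Y\) and \(L\) on \(X\) used to compute the Hilbert polynomials. For a non-degenerate rational map \(f:Y\dashrightarrow X\) with \(f(Y)\not\subset\Delta\), Lemma \ref{lemma: Hilb pol} gives \(P_f(m)=\chi\bigl(Y,(A\otimes f^{*}L)^{\otimes m}\bigr)\). Writing \(D=[A]+[f^{*}L]\) for the first Chern class of \(A\otimes f^{*}L\) and applying Hirzebruch--Riemann--Roch yields
\[
P_f(m)=\int_Y e^{mD}\,\mathrm{td}(T_Y)=\sum_{k=0}^{p}\frac{m^{k}}{k!}\int_Y D^{k}\cdot\mathrm{td}_{p-k}(T_Y),
\]
where \(\mathrm{td}_{p-k}(T_Y)\) is the codimension-\((p-k)\) part of the Todd class. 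Thus the coefficient of \(m^{k}\) equals \(\tfrac{1}{k!}\int_Y\bigl([A]+[f^{*}L]\bigr)^{k}\cdot\mathrm{td}_{p-k}(T_Y)\). Since intersection numbers depend only on numerical classes, and \(A\) and \(\mathrm{td}(T_Y)\) are fixed, each such coefficient is the value at \(\nu:=[f^{*}L]\) of a fixed polynomial map \(\mathrm{N}^1(Y)_{\RR}\to\RR\) of degree at most \(p\). It therefore suffices to confine all the classes \(\nu\) to one compact set.

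To do this I would use that \(f^{*}L\) is nef, which is established inside the proof of Lemma \ref{lemma: Hilb pol}, together with the hypothesis that \(X\) is \(p\)-algebraically bounded modulo \(\Delta\): by Definition \ref{def:bounded} there is a constant \(C=C(X,\Delta,L,Y,A)\), \emph{independent of \(f\)}, with \(f^{*}L\cdot A^{p-1}\le C\). Hence every \(\nu=[f^{*}L]\) lies in \(K:=\{N\in\mathrm{Nef}(Y):N\cdot A^{p-1}\le C\}\). The step I expect to be the real obstacle is to verify that \(K\) is compact; this reduces to the positivity statement that the linear functional \(N\mapsto N\cdot A^{p-1}\) is strictly positive on \(\mathrm{Nef}(Y)\setminus\{0\}\), equivalently that a nef class \(N\) with \(N\cdot A^{p-1}=0\) is numerically trivial. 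Granting this, \(K\) is a closed subset of the nef cone on which a linear functional positive away from the origin is bounded, hence compact by a standard rescaling argument (normalize any would-be unbounded sequence and pass to a nonzero nef limit on which the functional vanishes).

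To prove the positivity statement I would induct on \(p=\dim Y\) (replacing \(A\) by a multiple so that it is very ample). The case \(p=1\) is the fact that a nef class of degree \(0\) on a curve is trivial; the case \(p=2\) follows from the Hodge index theorem, since the intersection form is negative definite on \(A^{\perp}\) while \(N^{2}\ge 0\) forces \(N\equiv 0\); and for \(p\ge 3\) one restricts to a general smooth ample divisor \(H\in|A|\), applies the inductive hypothesis to the nef class \(N|_H\), which satisfies \((N|_H)\cdot(A|_H)^{p-2}=N\cdot A^{p-1}=0\), and then invokes the Lefschetz injectivity \(\mathrm{N}^1(Y)_{\RR}\hookrightarrow\mathrm{N}^1(H)_{\RR}\), valid because \(\dim H\ge 2\). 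With compactness of \(K\) in hand, each Hilbert-polynomial coefficient is a continuous (indeed polynomial) function evaluated on the compact set \(K\), and is therefore bounded independently of \(f\); this is exactly the asserted uniform boundedness of the coefficients.
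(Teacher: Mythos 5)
Your proof is correct, but it takes a genuinely different route from the paper's. The paper also starts from Lemma \ref{lemma: Hilb pol}, but then invokes the theorem of Koll\'ar--Matsusaka to reduce everything to bounding the two numbers \((A+f^{*}L)^{p}\) and \((A+f^{*}L)^{p-1}\cdot K_{Y}\), and bounds \((f^{*}L)^{i}\cdot A^{p-i}\) for all \(i\) by an explicit induction: the hypothesis \(f^{*}L\cdot A^{p-1}\le C\) plus the numerical criterion for bigness make \(rA-f^{*}L\) big (hence effective up to a multiple) for a fixed \(r\), and intersecting with \((f^{*}L+A)^{i}\cdot A^{p-i-1}\) propagates the bound. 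You instead expand all coefficients via Hirzebruch--Riemann--Roch and trap \([f^{*}L]\) in the compact slice \(\{N\ \text{nef}: N\cdot A^{p-1}\le C\}\) of the nef cone. Your approach buys something real: since \(Y\) is fixed in the statement (the bound may depend on \(Y\)), you avoid the deep Koll\'ar--Matsusaka theorem entirely, at the cost of the positivity fact that \(N\mapsto N\cdot A^{p-1}\) is strictly positive on nonzero nef classes --- which you correctly identify as the crux and prove by a standard Hodge-index/Lefschetz induction (this compactness of nef slices is also in Lazarsfeld). The paper's induction on \((f^{*}L)^{i}\cdot A^{p-i}\) is in effect an explicit, hands-on substitute for that compactness, and its reliance on Koll\'ar--Matsusaka would pay off only in a variant where \(Y\) varies with bounded invariants. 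Two small points worth a sentence in a write-up: the coefficients \(\frac{1}{k!}\int_Y D^{k}\cdot\mathrm{td}_{p-k}(T_Y)\) do descend to well-defined polynomial functions on \(\mathrm{N}^{1}(Y)_{\RR}\) because a numerically trivial divisor annihilates every \(1\)-cycle class (so \(\deg(D^{k}\cdot\alpha)\) depends only on the numerical class of \(D\)); and the Lefschetz injectivity \(\mathrm{N}^{1}(Y)_{\RR}\hookrightarrow\mathrm{N}^{1}(H)_{\RR}\) for \(\dim H\ge 2\) needs hard Lefschetz together with the coincidence of numerical and homological equivalence for divisors, or can be replaced by the generalized Hodge index theorem for the form \(D\mapsto D^{2}\cdot A^{p-2}\). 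Neither is a gap.
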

 \begin{proof}
Fix \(A \to Y\) and \(L \to X\) two ample line bundles, and let \(f: Y \dashrightarrow X\) be a non-degenerate rational map whose image is not included in \(\Delta\). By Lemma \ref{lemma: Hilb pol}, one has to bound independently of \(f\) the coefficients of the Hilbert polynomial of \(Y\) computed with respect to \(A\otimes f^{*}L\). By the work of Koll\'ar and Matsusaka \cite{KollarMat},
it suffices to bound the intersection numbers \((A+f^{*}L)^{p}\) and \((A+f^{*}L)^{p-1} \cdot K_{Y}\)  independently of \(f\).

 Recall that the hypothesis of \(p\)-algebraic boundedness gives us a constant \(C\) independent of \(f\) such that the following inequality holds
\[
f^{*}L \cdot A^{p-1}
\leq 
C.
\] 
By the numerical criterion of bigness (see e.g. \cite{Lazzie1}[Thm 2.2.15]), one deduces that for \(r>p\frac{C}{A^{p}}\), the line bundle \(rA- f^{*}L\) is big. In particular, a large enough multiple of this line bundle is effective. One now shows by induction on \(i \geq 1\) the existence of a constant \(C_{i}\) independent of \(f\) such that the following inequality holds
\[
(f^{*}L)^{i}A^{p-i} 
\leq 
C_{i}.
\]
One knows that it is satisfied for \(i=1\). Suppose that it is satisfied for \(1\leq i < p\). Since the line bundle \(f^{*}L\) is nef (see e.g. the proof of Lemma \ref{lemma: Hilb pol}), the line bundle \(f^{*}L+A\) is ample. From the effectivity of a large enough multiple of \(rA-f^{*}L\), one then deduces the following inequality
\[
(rA-f^{*}L) \cdot (f^{*}L+A)^{i} \cdot A^{p-i-1}
\geq
0.
\]
This in turn implies the following inequality
\[
(f^{*}L)^{i+1}\cdot A^{p-i-1}
\leq
(rA-f^{*}L) \cdot \left(\sum\limits_{k=0}^{i-1} \binom{k}{i} (f^{*}L)^{k}\cdot A^{p-1-k}\right) + r f^{*}L^{i}\cdot A^{p-i}.
\]
By induction hypothesis, each term appearing in the (developped) sum on the right can be bounded by a constant independent of \(f\), so that the induction is complete.

It is now clear that the above implies that one can bound \((f^{*}L+A)^{p}\) independently of \(f\). As for \((f^{*}L+A)^{p-1}\cdot K_{Y}\), one picks \(m\) such that \(mA-K_{Y}\) is ample (\(m\) is obviously independent of \(f\)), so that
\[
(f^{*}L+A)^{p-1}\cdot K_{Y}
\leq
m (f^{*}L+A)^{p-1} \cdot A
\]
and the above allows to conclude.
\end{proof}

\section{From positivity modulo $\Delta$ to  intermediate algebraic hyperbolicity}
\label{section: ample to algebraic}
  In this section, we prove the following criterion for intermediate algebraic hyperbolicity, wich must be seen as the algebraic analogue of Demailly's Theorem \ref{thm:dem2}:
\begin{theorem}
\label{thm: criteria for bounded}
Let \(X\) be a smooth projective variety, and let \(\Delta\subset X\) be a proper closed subset. If \(\bigwedge^p\Omega^1_X\) is ample modulo \(\Delta\), then \(X\) is \(p\)-algebraically hyperbolic modulo \(\Delta\).  
\end{theorem}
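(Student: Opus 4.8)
The plan is to reduce the desired numerical inequality to an \emph{effectivity} statement on $Y$, and to feed in the hypothesis through the tautological lift of $f$ into $\PP(E^\vee)$, where $E:=\bigwedge^{p}\Omega^1_X$. Write $\pi\colon\PP(E^\vee)\to X$ for the projection and $\mathcal{O}(1)$ for the tautological quotient line bundle, so that (by the reformulation in the excerpt) ``$E$ ample modulo $\Delta$'' means that for every ample $A$ on $\PP(E^\vee)$ there is an $m$ with $\Bs(\mathcal{O}(m)\otimes A^{-1})\subseteq\pi^{-1}(\Delta)$. Fixing the ample $L$ on $X$, I claim it suffices to produce an integer $m=m(X,\Delta,L)$, \emph{independent of $Y$ and $f$}, such that $mK_Y-f^{*}L$ is effective on $Y$. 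Granting this, since we test only on canonically polarized $Y$ the class $K_Y$ is ample, so intersecting the effective class $mK_Y-f^{*}L$ against the positive class $K_Y^{p-1}$ yields $f^{*}L\cdot K_Y^{p-1}\le m\,K_Y^{p}$, and $\alpha:=m$ does the job.

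The effective class comes from the derivative of $f$. Since $\dim Y=p\le\dim X$ and $f$ is non-degenerate, $f$ is generically finite onto its image, so on the domain of definition $U=Y\setminus F$ (with $\codim_Y F\ge 2$, as $Y$ is smooth) the differential $T_U\to f^{*}T_X$ is generically injective. Dualizing and taking $p$-th exterior powers produces a nonzero map
\[
\phi\colon f^{*}E=f^{*}\textstyle\bigwedge^{p}\Omega^1_X\longrightarrow \textstyle\bigwedge^{p}\Omega^1_U=\omega_U=K_U,
\]
using crucially that $\dim Y=p$. Discarding the divisorial part $D\ge 0$ of the degeneracy locus of $\phi$ (and a further closed set of codimension $\ge 2$), $\phi$ becomes a surjection onto $K_Y(-D)$, hence defines a morphism $\gamma$ from an open $U'\subseteq Y$ with $\codim_Y(Y\setminus U')\ge 2$ into $\PP(E^\vee)$ satisfying $\pi\circ\gamma=f$ and $\gamma^{*}\mathcal{O}(1)\cong K_Y(-D)|_{U'}$; by smoothness of $Y$ these extend to equalities of line-bundle classes on all of $Y$. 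The key point to retain is that $\pi\circ\gamma=f$ together with $f(Y)\not\subseteq\Delta$ forces $\gamma(U')\not\subseteq\pi^{-1}(\Delta)$.

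It remains to turn ampleness modulo $\Delta$ into effectivity of $mK_Y-f^{*}L$. Fix once and for all a very ample $\mathcal{H}$ on $\PP(E^\vee)$ and set $A:=\pi^{*}L\otimes\mathcal{H}$, which is ample because $\pi^{*}L$ is nef. The hypothesis yields $m\ge 1$ with $\Bs(\mathcal{O}(m)\otimes A^{-1})\subseteq\pi^{-1}(\Delta)$; writing $\mathcal{O}(m)\otimes\pi^{*}L^{-1}=(\mathcal{O}(m)\otimes A^{-1})\otimes\mathcal{H}$ and using that $\mathcal{H}$ is base-point-free, one checks $\Bs(\mathcal{O}(m)\otimes\pi^{*}L^{-1})\subseteq\pi^{-1}(\Delta)$. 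Since $\gamma(U')\not\subseteq\pi^{-1}(\Delta)$, some global section of $\mathcal{O}(m)\otimes\pi^{*}L^{-1}$ does not vanish identically along $\gamma(U')$; its pullback extends (smoothness of $Y$, codimension $\ge 2$) to a nonzero section of $\gamma^{*}(\mathcal{O}(m)\otimes\pi^{*}L^{-1})=m(K_Y-D)-f^{*}L$. Thus $m(K_Y-D)-f^{*}L$ is effective, and adding $mD\ge 0$ shows $mK_Y-f^{*}L$ is effective. Crucially $m$ was extracted from a single application of the hypothesis to the fixed ample bundle $A$, so it depends only on $X,\Delta,L$ and not on $Y$ or $f$.

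The main obstacle is the middle step: making the ``derivative lift'' $\gamma$ precise when $f$ is only a rational, generically finite map, and correctly identifying $\gamma^{*}\mathcal{O}(1)$ with $K_Y$ up to an effective divisor while controlling base loci across the codimension $\ge 2$ indeterminacy; everything around it is formal positivity bookkeeping. I would also remark that the case $p=\dim X$ recovers ``$X$ of general type $\Rightarrow$ $X$ is $\dim X$-algebraically hyperbolic'', since then $E=\omega_X$ and ampleness modulo $\Delta$ of $\omega_X$ is precisely bigness of $\omega_X$.
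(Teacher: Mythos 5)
Your proposal is correct and follows essentially the same route as the paper: lift $f$ via its $p$-th exterior differential to $\PP\bigl(\bigwedge^{p}T_X\bigr)$, observe that $K_Y-\tilde f^{*}\mathcal{O}(1)$ is effective, use ampleness modulo $\Delta$ together with $f(Y)\not\subseteq\Delta$ to make $m\tilde f^{*}\mathcal{O}(1)-f^{*}L$ effective for an $m$ depending only on $(X,\Delta,L)$, and intersect the resulting effective class $mK_Y-f^{*}L$ with $K_Y^{p-1}$. Your treatment of the indeterminacy locus, the degeneracy divisor $D$, and the reduction from the augmented-base-locus formulation to a single base-locus statement for $\mathcal{O}(m)\otimes\pi^{*}L^{-1}$ only makes explicit what the paper's proof asserts more briefly.
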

 
\begin{proof}
Fix \(L\) an ample line bundle on \(X\), and let \(Y\) be a  smooth projective \(p\)-dimensional variety with \(K_Y\) ample. Let also  \(f:Y\dashrightarrow  X\) be a non-degenerate rational map such that \(f(Y)\not\subset \Delta\). To prove the theorem, one needs to show that there exists a constant \(\alpha\) independent of \(f\) and \(Y\) such that the following inequality is satisfied
 \begin{equation*}
 \begin{aligned}
f^{*}L \cdot K_Y^{p-1} \leq \alpha K_{Y}^{p}.
\end{aligned}
\end{equation*}

 Let \(\pi_{X}\colon \P(\bigwedge^{p}T_{X}) \to X\)  be the natural projection onto \(X\), and let  \(\O_{\P(\bigwedge^{p}TX)}(1)\)  be
the dual of the tautological line bundle on \(\P(\bigwedge^{p}T_{X})\).
 As the rational map \(f\) is non-degenerate, it induces via its differential a rational map 
\[
\tilde{f}\colon
\xymatrix{ 
Y \simeq \P(\bigwedge^{p}T_{Y})
 \ar@{-->}[r] 
 & 
 \P\big(\bigwedge^{p} T_{X}\big),} \
(y, [v_{1}\wedge \dotsc \wedge v_{p}]) \mapsto \big(f(y), [df_{y}(v_{1})\wedge \dotsc \wedge df_{y}(v_{p})]\big).
\]
Observe that \(\tilde{f}\) is well defined outside a closed subset \(F\) of codimension at least two.
Indeed, writing $\tilde{f}$ in trivializations, one sees that the indeterminacy locus of $\tilde{f}$  comes either from the indeterminacy locus of \(f\), which has codimension at least two  as \(Y\) is smooth, or from the indeterminacy locus of a rational map from \(Y\) into the projective space \(\P\left(\bigwedge^{p} k^{\dim(X)}\right)\), which is also of codimension at least two by smoothness of \(Y\).
In particular, as 
\(
\Pic(Y) \simeq \Pic(Y\setminus F)
\)
by smoothness of \(Y\), the pull-back by $\tilde{f}$ of any line bundle on $\P(\bigwedge^{p}TX)$ is well-defined.

Observe now that the rational map \(f\) induces also the following non-trivial morphism of line bundles on \(Y \setminus F\):
\begin{displaymath}
 \overline{f} \colon
  \left|
  \begin{array}{rcl}
    (K_{Y}^{\vee})_{\vert Y\setminus F}
    & 
    \longrightarrow 
    & 
   \left(\tilde{f}^{*}\O_{ \P(\bigwedge^{p} T_{X})}(-1)\right)_{\vert Y\setminus F}
    \\
    \big(y, v_{1} \wedge \dotsc \wedge v_{p}\big) 
    & 
    \longmapsto 
    & 
   \big(y, df_{y}(v_{1}) \wedge \dotsc \wedge df_{y}(v_{p})\big).
     \\
  \end{array}
  \right..
\end{displaymath}
It extends to a non-trivial morphism of line bundles on \(Y\) by Riemann's extension theorem (or an algebraic variant of it). In particular, since 
\[
\Hom(K_{Y}^{\vee}, \tilde{f}^{*}\O_{ \P(\bigwedge^{p} T_{X})}(-1)) 
\simeq 
\mathrm{H}^{0}(Y,K_{Y}\otimes \tilde{f}^{*}\O_{ \P(\bigwedge^{p} T_{X})}(-1)),
\]
one deduces that the divisor 
\( K_{Y}\otimes \tilde{f}^{*}\O_{ \P(\bigwedge^{p} T_{X})}(-1)\)
is effective. 
On the other hand, as \(\bigwedge^{p} \Omega_{X}^1\) is ample modulo \(\Delta\), there exists  an integer \(m >0\) such that 
\[
\Bs\big(\O_{\P(\bigwedge^{p}TX)}(m)\otimes \pi_{X}^{*}L^{-1}\big) \subset \pi_{X}^{-1}(\Delta).
\] 
Note that \(m\) is independent of \(f\) and \(Y\). Since \(f(Y) \not\subset \Delta\) (by assumption), the pull-back
\[
\tilde{f}^{*}\big(\O_{\P(\bigwedge^{p}TX)}(m)\otimes \pi_{X}^{*}L^{-1}\big)
=
\tilde{f}^{*}\O_{\P(\bigwedge^{p}TX)}(m)\otimes f^{*}L^{-1}
\] 
remains effective. 
Denote \(E:=f^{*}L^{-1}\otimes K_{Y}^{\otimes m}\) so that one has the equality:
\begin{equation}
\begin{aligned}
\label{eq1}
f^{*}L+E=mK_{Y}.
\end{aligned}
\end{equation}
By the above, \(E\) is an effective divisor, as it the sum of two effective divisors.

One now concludes the proof as follows. Take \(r \in \NN\) such that \(rK_{Y}\) is very ample, and let \(H\) be a general complete intersection of \(p-1\) hypersurfaces in the linear system \(|rK_{Y}|\). Then one has the following equality 
\[
(f^{*}L)\cdot K_Y^{p-1} = \frac{1}{r^{p-1}} \deg(f^{*}L_{\vert H}).
\]
For a general \(H\), the restricted divisor \(E_{\vert H}\) remains effective, so that it follows from \eqref{eq1} that
\[
(f^{*}L)\cdot K_Y^{p-1}
\leq
 \frac{1}{r^{p-1}} 
 \deg\left((mK_{Y})_{\vert H})\right)
 =
 m K_{Y}^{p}.
 \]
This finishes the proof, as \(m\) is independent of \(f\) and \(Y\).

\end{proof}

  \section{Finiteness results for intermediate pseudo bounded and pseudo hyperbolic varieties}
  \label{section: finiteness}
In this section, we investigate the finiteness of the sets of surjective morphisms and birational automorphisms for pseudo \(p\)-algebraically bounded varieties and pseudo \(p\)-algebraically hyperbolic varieties. In the first two parts, we prove such finiteness results for \(\dim(X)\) algebraically bounded varieties. In a third and last part, we state the same finiteness results for \(p\)-algebraically bounded and \(p\)-algebraically hyperbolic varieties: even though the statements are immediate from Proposition \ref{prop: string bound} and Proposition \ref{prop: string hyp}, we choose to put them in a separate section to emphasize on the fact that the hypothesis ``\(k\) uncountable`` is \`a priori important in the case of intermediate algebraic boundedness, while it is not for intermediate algebraic hyperbolicity.

  \subsection{Surjective morphisms to \(\dim(X)\)-algebraically bounded varieties}
  \label{sse: sur dimX}
  
  Crucial to our proofs below is the (obvious) non-uniruledness of a \(\dim(X)\) algebraically bounded variety. We record this observation in the following lemma.
  
  \begin{lemma}\label{lem:non_uni}
  Let \(X\) be a projective variety. If \(X\) is \(\dim(X)\)-algebraically bounded, then \(X\) is non-uniruled. \qed
  \end{lemma}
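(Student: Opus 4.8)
The plan is to argue by contradiction: assuming $X$ is uniruled, I will manufacture, from a \emph{fixed} smooth projective $Y$ of dimension $n:=\dim X$, a sequence of non-degenerate rational maps $f_d:Y\dashrightarrow X$ for which $f_d^{*}L\cdot A^{n-1}$ is unbounded, contradicting $n$-algebraic boundedness. Recall that for $p=n=\dim X$ a non-degenerate rational map $Y\dashrightarrow X$ from an $n$-dimensional $Y$ is exactly a dominant (hence generically finite) rational map, and the condition ``image not contained in $\Delta=\emptyset$'' is vacuous; so boundedness gives, for fixed $Y$, $A$, $L$, a constant $C$ with $f^{*}L\cdot A^{n-1}\le C$ for all such $f$.

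Assume $X$ is uniruled. By definition there is a variety $Z$ with $\dim Z=n-1$ and a dominant rational map $\PP^1\times Z\dashrightarrow X$; replacing $Z$ by a resolution of singularities, I may assume $Z$ is smooth, so that $Y:=\PP^1\times Z$ is a fixed smooth projective variety of dimension $n$, equipped with its projections $p_1:Y\to\PP^1$ and $p_2:Y\to Z$ and with a dominant (hence generically finite) rational map $g:Y\dashrightarrow X$. For each $d\ge 1$ let $\phi_d:\PP^1\to\PP^1$ be a degree-$d$ endomorphism and set $\sigma_d:=\phi_d\times\mathrm{id}_Z:Y\to Y$, a finite surjective morphism. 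I define $f_d:=g\circ\sigma_d:Y\dashrightarrow X$. Each $f_d$ is dominant, hence non-degenerate, and (as $Y$ is smooth, so the indeterminacy locus has codimension $\ge 2$) the pullback $f_d^{*}L=\sigma_d^{*}(g^{*}L)$ is a well-defined element of $\Pic(Y)$.

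The heart of the matter is a short intersection computation. Fix ample line bundles $L$ on $X$ and $A$ on $Y$, and let $F:=p_1^{*}[\mathrm{pt}]=[\{\mathrm{pt}\}\times Z]$ be the fibre class of $p_1$. Using $\Pic(Y)=p_1^{*}\Pic(\PP^1)\oplus p_2^{*}\Pic(Z)$, I may write $g^{*}L=aF+p_2^{*}M$ with $M\in\Pic(Z)$ and $a=g^{*}L\cdot\ell$, where $\ell=\PP^1\times\{z\}$ is a general ruling curve (note $F\cdot\ell=1$ and $p_2^{*}M\cdot\ell=0$). Since $p_1\circ\sigma_d=\phi_d\circ p_1$ and $p_2\circ\sigma_d=p_2$, one has $\sigma_d^{*}F=dF$ and $\sigma_d^{*}p_2^{*}M=p_2^{*}M$, hence
\[
f_d^{*}L=\sigma_d^{*}(g^{*}L)=ad\,F+p_2^{*}M ,
\]
and therefore
\[
f_d^{*}L\cdot A^{n-1}=ad\,(F\cdot A^{n-1})+(p_2^{*}M\cdot A^{n-1}).
\]
Here $F\cdot A^{n-1}=(A|_{\{\mathrm{pt}\}\times Z})^{n-1}>0$ because $A$ is ample on the $(n-1)$-dimensional subvariety $\{\mathrm{pt}\}\times Z$, so the right-hand side grows linearly in $d$ as soon as $a>0$; this contradicts the uniform bound $f_d^{*}L\cdot A^{n-1}\le C$.

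The only point requiring care — and the step I expect to be the main (if minor) obstacle — is the positivity $a>0$, i.e.\ that the general ruling curve $\ell$ has positive $L$-degree under $g$. This holds precisely because $g$ is dominant: $g^{*}L$ is nef (being the pullback of the ample $L$), and if $a=g^{*}L\cdot\ell=0$ for general $z$, then, $L$ being ample, $g$ would contract every ruling curve $\{\mathrm{pt}\}\times Z$ to a point, forcing $g$ to factor rationally through $p_2$ and hence to have image of dimension at most $\dim Z=n-1$, contradicting dominance. Thus $a\ge 1$, the computation applies, and $X$ cannot be simultaneously uniruled and $n$-algebraically bounded.
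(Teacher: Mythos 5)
Your proof is correct. The paper states this lemma with a \qed{} and no argument (calling the non-uniruledness ``obvious''), and what you have written is precisely the natural argument being omitted: precomposing a dominant rational map \(g:\PP^1\times Z\dashrightarrow X\) with \(\phi_d\times\mathrm{id}\) makes \(f_d^*L\cdot A^{n-1}\) grow linearly in \(d\), contradicting \(\dim(X)\)-algebraic boundedness for the fixed source \(Y=\PP^1\times Z\). Two small points you pass over quickly but which are fine: one should take \(Z\) projective as well as smooth (compactify before resolving), and the identity \(f_d^*L=\sigma_d^*(g^*L)\) in \(\Pic(Y)\) requires the observation that both sides agree on the complement of \(\sigma_d^{-1}(F_g)\), a closed subset of codimension at least \(2\) since \(\sigma_d\) is finite, which suffices by smoothness of \(Y\).
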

  
 If \(X\) and \(Y\) are projective varieties, we let \(\underline{\Hom}(Y,X)\) be the scheme parametrizing morphisms from \(Y\) to \(X\). If $\mathrm{Hilb}(Y\times X)$ is the Hilbert scheme of $Y\times X$, then the morphism $\underline{\Hom}(Y,X)\to \mathrm{Hilb}(Y\times X)$ mapping a morphism $f:Y\to X$ to its graph $\Graph(f)\subset Y\times X$ is an open immersion \cite[Theorem~6.6]{Nitsure}.  
 We also let $\underline{\mathrm{Sur}}(Y,X)$ be the open and closed subscheme of $\underline{\Hom}(Y,X)$ parametrizing surjective morphisms from $Y$ to $X$. 
 
  Before stating and proving our results, recall that the scheme $\underline{\mathrm{Sur}}(Y,X)$ is of finite type  if and only if it has finitely many connected components. That is, $\underline{\mathrm{Sur}}(Y,X)$ is of finite type  if and only if, for every ample line bundle $L$ on $X$ and every ample line bundle $A$ on $Y$, there is an integer $n\geq 1$ and polynomials $\Phi_1,\ldots, \Phi_n $ in $\QQ[t]$ such that, for every surjective morphism $f:Y\to X$, the Hilbert polynomial of $f$ with respect to the ample line bundle $A\boxtimes L_{\vert \Graph(f)}$  lies in the finite set $\{\Phi_1,\ldots, \Phi_n\}$. 
 From \textsl{slight adaptations} of Proposition \ref{prop: string bound} and Proposition \ref{prop: bound Hil}, we obtain the following lemma.
  \begin{lemma}\label{lem:boundedness_of_Sur}
 Let \(X\) be a projective \(\dim X\)-algebraically bounded variety. Then for every smooth projective variety \(Y\) of dimension \(\dim(Y) \geq \dim(X)\), the scheme \(\underline{\mathrm{Sur}}(Y,X)\) is of finite type.
 \end{lemma}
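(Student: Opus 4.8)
Here is the plan. The statement is equivalent, as recalled in the paragraph preceding it, to the assertion that the Hilbert polynomials of the graphs of surjective morphisms \(f\colon Y\to X\) (computed with respect to \(A\boxtimes L_{\vert\Graph(f)}\)) lie in a finite set. Writing \(n:=\dim Y\geq p:=\dim X\), I would fix ample line bundles \(L\) on \(X\) and \(A\) on \(Y\), the latter very ample. Since each \(f\) is an honest \emph{morphism}, the projection \(\Graph(f)\to Y\) is an isomorphism, so Lemma \ref{lemma: Hilb pol} identifies the Hilbert polynomial of \(f\) with that of the polarized variety \((Y, A\otimes f^{*}L)\). Exactly as in the proof of Proposition \ref{prop: bound Hil}, the theorem of Koll\'ar--Matsusaka \cite{KollarMat} (see \cite{Lazzie2}[Thm 6.3.29]) then reduces the problem to bounding, independently of \(f\), the two intersection numbers \((A+f^{*}L)^{n}\) and \((A+f^{*}L)^{n-1}\cdot K_{Y}\). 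Expanding these and using that \((f^{*}L)^{i}=0\) for \(i>p\) (as \(L^{i}=0\) on the \(p\)-dimensional \(X\)), everything comes down to bounding the numbers \((f^{*}L)^{i}\cdot A^{n-i}\) and \((f^{*}L)^{i}\cdot A^{n-1-i}\cdot K_{Y}\) for \(1\leq i\leq p\).

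The key step, which plays the role of Proposition \ref{prop: string bound}, is to produce a uniform bound on \(f^{*}L\cdot A^{n-1}\). Here is where the hypothesis that \(f\) is a morphism (rather than a rational map) lets me dispense with the uncountability of \(k\). I would fix, \emph{once and for all}, a general smooth complete intersection \(Z_{0}=H_{1}\cap\cdots\cap H_{n-p}\) with \(H_{j}\in|A|\), which by Bertini is smooth of dimension \(p\). The crucial observation is that this \emph{single} \(Z_{0}\) works for \emph{every} surjective \(f\): each fibre \(f^{-1}(x)\) has dimension at least \(n-p\), and a complete intersection of \(n-p\) ample divisors meets every closed subvariety of dimension \(\geq n-p\) (cutting successively by the ample \(H_{j}\) keeps the intersection nonempty). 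Hence \(f(Z_{0})=X\), so \(f_{\vert Z_{0}}\colon Z_{0}\to X\) is surjective, i.e. non-degenerate, for all \(f\) simultaneously. Applying the \(p\)-algebraic boundedness of \(X\) to the fixed polarized variety \((Z_{0},A_{\vert Z_{0}})\) yields a constant \(C_{0}\), independent of \(f\), with
\[
f^{*}L\cdot A^{n-1}=(f_{\vert Z_{0}})^{*}L\cdot (A_{\vert Z_{0}})^{p-1}\leq C_{0}.
\]

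With this base bound in hand, I would run the inductive argument of Proposition \ref{prop: bound Hil} essentially verbatim, now in ambient dimension \(n\): since \(f^{*}L\) is nef (as in the proof of Lemma \ref{lemma: Hilb pol}), the numerical criterion of bigness \cite{Lazzie1}[Thm 2.2.15] gives \(rA-f^{*}L\) big for some \(r\) independent of \(f\); an induction on \(i\) then bounds each \((f^{*}L)^{i}\cdot A^{n-i}\), and choosing \(m\) with \(mA-K_{Y}\) ample bounds the remaining terms \((f^{*}L)^{i}\cdot A^{n-1-i}\cdot K_{Y}\). Thus \((A+f^{*}L)^{n}\) and \((A+f^{*}L)^{n-1}\cdot K_{Y}\) are uniformly bounded, only finitely many Hilbert polynomials occur, and \(\underline{\mathrm{Sur}}(Y,X)\) is of finite type.

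The main obstacle is precisely the uniformity of the base bound, which in Proposition \ref{prop: string bound} forced the assumption that \(k\) be uncountable (needed there to choose one hyperplane section avoiding the indeterminacy loci and degeneracy loci of \emph{all} maps in a countable family). I expect that obstacle to dissolve here exactly because we restrict to surjective morphisms: the ampleness of \(A\) guarantees that a single general \(Z_{0}\) meets every fibre of every such \(f\), so no countability argument is required. Everything else is a routine transcription of Propositions \ref{prop: string bound} and \ref{prop: bound Hil} to dimension \(n\), together with the Koll\'ar--Matsusaka finiteness input; note also that non-uniruledness of \(X\) (Lemma \ref{lem:non_uni}) is available should one need to exclude degenerate behaviour of the fibres.
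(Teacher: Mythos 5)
Your proof is correct and takes essentially the same route as the paper: in both, the key observation is that a surjective morphism $f$ restricts to a surjective (hence non-degenerate) map on ample divisors because every fibre has dimension at least $\dim Y - \dim X$, which replaces the uncountability hypothesis needed in Proposition \ref{prop: string bound}, after which the Koll\'ar--Matsusaka argument of Proposition \ref{prop: bound Hil} bounds the Hilbert polynomials. The only cosmetic difference is that you cut down to dimension $\dim X$ with a single fixed general complete intersection, whereas the paper descends one hyperplane section at a time.
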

 \begin{proof}
 If \(k\) is \textsl{uncountable}, this follows immediately from Proposition \ref{prop: string bound} and Proposition \ref{prop: bound Hil}.
 In a greater generality, one can proceed as in Proposition \ref{prop: string bound} and Proposition \ref{prop: bound Hil} once the following is noticed. Keep the notations of Proposition \ref{prop: string bound}, and suppose that \(p \geq \dim(X)\). The uncountability hypothesis was used to find a smooth ample divisor \(H\) in the linear system \(|A|\) such that for any \(i \in \NN\), the rational maps \((f_{i})_{\vert H}\) remain non-degenerate. 
 Suppose now that the \(f_{i}: Y \rightarrow X\) are instead surjective morphisms. It is then clear that for any smooth ample divisor \(H\), the restriction maps \((f_{i})_{\vert H}: H \to Y\) remain surjective, as for any \(y \in Y\), \(f^{-1}(\{y\})\) is positive dimensional, and thus intersects \(H\).
 \end{proof}

 To prove the rigidity of surjective morphisms, we will appeal to a theorem of Hwang-Kebekus-Peternell \cite{HKP}. Their result  relates the infinitesimal deformation space of a surjective morphism $Y\to X$ to the infinitesimal automorphisms of a suitable cover of $X$. For this reason, we investigate first the discreteness of \(\Aut(X)\), where \(\Aut(X)\) denotes the locally finite type group scheme of automorphisms of \(X\). Interestingly, to prove the rigidity of $\Aut(X)$, we will appeal to the boundedness of \(\underline{\mathrm{Sur}}(Y,X)\) (for every smooth projective variety \(Y\)) proved above.

 \begin{lemma}\label{lem:discreteness}
Let \(X\) be a projective \(\dim(X)\)-algebraically bounded  variety. Then $\mathrm{Aut}(X)$ is  zero-dimensional.
 \end{lemma}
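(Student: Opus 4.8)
The plan is to argue by contradiction: suppose that $\Aut(X)$ has positive dimension, so that its identity component $G := \Aut^{0}(X)$ is a connected algebraic group of positive dimension acting faithfully on $X$. The first step is to reduce to the case of an abelian variety. By Chevalley's structure theorem, $G$ is an extension of an abelian variety by a connected linear algebraic group $G_{\mathrm{aff}}$. If $G_{\mathrm{aff}}$ were nontrivial it would contain a copy of $\Gm$ or $\Ga$, and then a theorem of Matsumura would force $X$ to be uniruled, contradicting Lemma \ref{lem:non_uni}. Hence $G = \Aut^{0}(X) =: A$ is a positive-dimensional abelian variety, acting on $X$ through a morphism $\alpha\colon A \times X \to X$.

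The key point is that the abelian factor carries the isogenies $[n]\colon A \to A$, whose effect on symmetric line bundle classes is multiplication by $n^{2}$; I would exploit this to manufacture surjections to $X$ of unbounded degree. Concretely, fix a resolution of singularities $\sigma\colon \widetilde X \to X$ (smooth projective and surjective) and set $Y := A \times \widetilde X$, a smooth projective variety with $\dim Y = \dim A + \dim X \geq \dim X$. For every integer $n \geq 1$ define the morphism
\[
\phi_{n}\colon Y = A \times \widetilde X \longrightarrow X, \qquad (a, \widetilde x) \longmapsto (na)\cdot \sigma(\widetilde x).
\]
Each $\phi_{n}$ is surjective, since for fixed $a$ the map $\widetilde x \mapsto (na)\cdot \sigma(\widetilde x)$ is the composite of the surjection $\sigma$ with the automorphism $(na)\cdot$; thus $\phi_{n}\in \underline{\mathrm{Sur}}(Y,X)(k)$ for all $n$.

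Next I would estimate the intersection number $\phi_{n}^{*}L \cdot A_{Y}^{\dim Y - 1}$ (for $L$ ample on $X$ and $A_{Y} = H_{A}\boxtimes H_{\widetilde X}$ ample on $Y$) and show that it tends to infinity with $n$. Restricting $\phi_{n}^{*}L$ to a fibre $A \times \{\widetilde x_{0}\}$ gives $[n]^{*}M$, where $M := o_{x_{0}}^{*}L$ is the pullback of $L$ along the orbit map $o_{x_{0}}\colon A \to X$, $a \mapsto a\cdot \sigma(\widetilde x_{0})$. Since $A$ acts faithfully, the orbit is a positive-dimensional abelian subvariety, so $L$ restricts to an ample class on it and $M$ is a nonzero nef class whose symmetric part $M^{+}$ is nonzero and nef; one then checks $M^{+}\cdot H_{A}^{g-1} > 0$, where $g = \dim A$. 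As $[n]^{*}M = n^{2}M^{+} + n M^{-}$, isolating the (nonnegative) summand $\binom{g+\dim X-1}{g-1}\,(H_{\widetilde X}^{\dim X})\,\bigl([n]^{*}M \cdot H_{A}^{g-1}\bigr)$ in the multinomial expansion of $\phi_{n}^{*}L \cdot A_{Y}^{\dim Y-1}$ (all terms nonnegative, since $\phi_{n}^{*}L$ and $A_{Y}$ are nef) yields a lower bound growing like $n^{2}$. This contradicts Lemma \ref{lem:boundedness_of_Sur}: finite type of $\underline{\mathrm{Sur}}(Y,X)$ forces the Hilbert polynomials, and hence the numbers $\phi_{n}^{*}L \cdot A_{Y}^{\dim Y-1}$, to take only finitely many values. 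Therefore $\Aut^{0}(X)$ is zero-dimensional, and so is $\Aut(X)$.

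The main obstacle is the positivity bookkeeping in the last paragraph: one must verify that the symmetric class $M^{+}$ is genuinely nonzero and pairs positively with $H_{A}^{g-1}$, which is exactly where the faithfulness of the action (forcing the orbit to be positive-dimensional) and the ampleness of $L$ enter. The remaining intersection-theoretic estimate is then a routine expansion into nonnegative summands, and the reduction to the abelian case via Chevalley and Matsumura is standard.
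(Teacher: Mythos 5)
Your proof is correct and follows essentially the same route as the paper: reduce via non-uniruledness to $\Aut^0(X)$ being an abelian variety $A$, then show that the surjections $(a,\widetilde x)\mapsto (na)\cdot\sigma(\widetilde x)$ from $A\times\widetilde X$ violate the finite-typeness of $\underline{\mathrm{Sur}}(A\times\widetilde X,X)$ guaranteed by Lemma \ref{lem:boundedness_of_Sur}. Your intersection-theoretic verification that these morphisms fall into infinitely many components is actually more detailed than the paper's (which simply notes that the degree of $[n]$ grows, so the Hilbert polynomials are pairwise distinct); the only point to tidy is that faithfulness yields a positive-dimensional orbit only for a \emph{general} point $x_0$, which suffices here since the relevant intersection number is independent of the fibre chosen.
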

 \begin{proof} Since $X$ is non-uniruled (Lemma \ref{lem:non_uni}),  the connected component   \(A:=\mathrm{Aut}^0(X)\) of the identity of \(\mathrm{Aut}(X)\) is an abelian variety.   For $a$ in $A$ and $x$ in $X$,  let $a\cdot x$ denote the action of $A$ on $X$. 
 Let \(\psi: \tilde{X} \to X\) be a resolution of singularities of \(X\). Consider the sequence of surjective morphisms \(f_n:A\times \tilde{X} \to X\) given by 
 \[
 f_n(a,\tilde{x}) = (na)\cdot \psi(x).
 \] 
 Since the degree of the (finite \'etale) morphism $[n]:A\times X \to A\times X$ equals $n^{2\dim A}$ and thus increases with $n$, one sees  that the Hilbert polynomials of the morphisms \(f_n:A\times \tilde{X}\to X\) are pairwise distinct. In particular, the scheme $\underline{\mathrm{Sur}}(A\times \tilde{X},X)$ is not of finite type. Since \(A\times \tilde{X}\) is smooth, this contradicts Lemma \ref{lem:boundedness_of_Sur}.
 \end{proof}
 
 We now record the basic fact that a finite surjective cover of a \(p\)-algebraically bounded projective variety remains \(p\)-algebraically bounded:
 \begin{lemma}\label{lem:covers}  
 Let $Z\to X$ be a finite surjective morphism of projective varieties. If \(X\) is \(p\)-algebraically bounded, then \(Z\) is \(p\)-algebraically bounded.
 \end{lemma}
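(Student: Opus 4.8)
The plan is to reduce the $p$-algebraic boundedness of $Z$ to that of $X$ by composing maps with the finite morphism $g\colon Z\to X$ and comparing ample bundles upstairs and downstairs. Fix an ample line bundle $L$ on $Z$, a smooth projective $p$-dimensional variety $Y$, and an ample line bundle $A$ on $Y$; I must produce a constant $C$, independent of the map, bounding $f^{*}L\cdot A^{p-1}$ over all non-degenerate rational maps $f\colon Y\dashrightarrow Z$. First I would fix once and for all an ample line bundle $L'$ on $X$. Since $g$ is finite, the pullback $g^{*}L'$ is ample on $Z$, and because $L$ is a fixed line bundle there is an integer $n\geq 1$, depending only on $L,L',g$, such that $n\,g^{*}L'-L$ is ample on $Z$.

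The key step is then a comparison of intersection numbers. For any non-degenerate rational map $f\colon Y\dashrightarrow Z$, the composite $g\circ f\colon Y\dashrightarrow X$ is again non-degenerate: as $g$ is finite we have $\dim Z=\dim X$ and $\dim g(f(Y))=\dim f(Y)=\min(\dim X,p)$, which is exactly the non-degeneracy condition. Moreover $(g\circ f)^{*}L'=f^{*}(g^{*}L')$, the pullbacks being well defined because $Y$ is smooth. Since $n\,g^{*}L'-L$ is ample, its pullback $f^{*}(n\,g^{*}L'-L)$ is nef, by the nefness-of-pullback argument already carried out in the proof of Lemma \ref{lemma: Hilb pol}. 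Intersecting the nef class $f^{*}(n\,g^{*}L'-L)$ with the $(p-1)$-fold product of the ample class $A$ on the $p$-dimensional variety $Y$ yields a nonnegative number, whence
\[
f^{*}L\cdot A^{p-1}\leq n\,(g\circ f)^{*}L'\cdot A^{p-1}.
\]

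To conclude I would invoke the $p$-algebraic boundedness of $X$: applied to the ample bundle $L'$ on $X$, the variety $Y$, and the ample bundle $A$ on $Y$, it furnishes a constant $C'>0$ with $(g\circ f)^{*}L'\cdot A^{p-1}\leq C'$ for every non-degenerate rational map $Y\dashrightarrow X$, in particular for every $g\circ f$. Setting $C:=nC'$ then bounds $f^{*}L\cdot A^{p-1}$ uniformly, which is the desired statement. The only point requiring care, and hence the main technical obstacle, is the verification that $f^{*}$ of an ample (or merely nef) class remains nef for a genuinely rational map $f$; this is where smoothness of $Y$ is essential, and where I rely on the global-generation argument used in Lemma \ref{lemma: Hilb pol} together with the identification $\Pic(Y)=\Pic(Y\setminus F)$ for the codimension $\geq 2$ indeterminacy locus $F$ of $f$.
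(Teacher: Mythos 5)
Your argument is correct and is essentially the paper's (one-line) proof written out in full: the paper simply invokes the fact that the pull-back of an ample line bundle under a finite morphism is ample, which is exactly your comparison $n\,g^{*}L'-L$ ample, followed by composing $f$ with $g$ and applying the boundedness of $X$. The only point needing care, the nonnegativity of $f^{*}(n\,g^{*}L'-L)\cdot A^{p-1}$ for a rational $f$ from smooth $Y$, is handled exactly as in the paper's Lemma~\ref{lemma: Hilb pol}, so nothing is missing.
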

 \begin{proof}
This is a straightforward consequence of the projection formula, and the fact that the pull-back of an ample by a finite morphism remains ample.
 \end{proof}
We now prove the desired finiteness of surjective morphisms \(Y\to X\), assuming \(X\) is \(\dim (X)\) algebraically bounded.
 
 \begin{theorem}
 \label{thm:sur} 
 If  \(X\) is  a  \(\dim(X)\)-algebraically bounded  projective variety and \(Y\) is a projective variety, then \(\underline{{\mathrm{Sur}}}(Y,X)\) is finite.
 \end{theorem}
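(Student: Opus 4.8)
The plan is to show that \(\underline{\mathrm{Sur}}(Y,X)\) is both of finite type and \(0\)-dimensional, since a finite type \(k\)-scheme of dimension zero is finite. The finite type statement is already contained in Lemma \ref{lem:boundedness_of_Sur}, so the real content is an \emph{infinitesimal rigidity} statement: every surjective morphism \(f\colon Y\to X\) should have vanishing infinitesimal deformation space inside \(\underline{\mathrm{Sur}}(Y,X)\). Before addressing this, I would first reduce to the case where \(Y\) is smooth and \(\dim(Y)\geq \dim(X)\). If \(\dim(Y)<\dim(X)\) then \(\underline{\mathrm{Sur}}(Y,X)\) is empty, since a surjection would force \(\dim(X)\leq \dim(Y)\). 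If \(Y\) is singular, I would pick a resolution of singularities \(\rho\colon \widetilde{Y}\to Y\); as \(\rho\) is surjective and \(X\) is separated, precomposition \(f\mapsto f\circ\rho\) is injective on \(k\)-points and sends surjections to surjections, giving an injection \(\mathrm{Sur}(Y,X)\hookrightarrow \mathrm{Sur}(\widetilde{Y},X)\). Hence finiteness for the smooth source \(\widetilde{Y}\) (with \(\dim(\widetilde{Y})=\dim(Y)\geq\dim(X)\)) yields finiteness for \(Y\).

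With \(Y\) smooth of dimension \(\geq \dim(X)\), Lemma \ref{lem:boundedness_of_Sur} shows that \(\underline{\mathrm{Sur}}(Y,X)\) has finitely many connected components. It then suffices to prove that each component is a single reduced point, for which I would show that the Zariski tangent space of \(\underline{\mathrm{Sur}}(Y,X)\) vanishes at every \(k\)-point \([f]\). This tangent space is the infinitesimal deformation space \(\mathrm{H}^0(Y, f^{*}T_X)\) of \(f\). This is exactly the input of the theorem of Hwang--Kebekus--Peternell \cite{HKP}: since \(X\) is non-uniruled by Lemma \ref{lem:non_uni}, the cited result produces a finite surjective cover \(\pi\colon X'\to X\) through which \(f\) factors and identifies the infinitesimal deformations of \(f\) with (a subspace of) the infinitesimal automorphisms \(\mathrm{H}^0(X', T_{X'})=\mathrm{Lie}\,\mathrm{Aut}^0(X')\) of that cover.

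To conclude, I would feed this cover back through the boundedness machinery. Since \(\pi\) is finite surjective and \(\dim(X')=\dim(X)\), Lemma \ref{lem:covers} shows that \(X'\) is again \(\dim(X')\)-algebraically bounded. Applying Lemma \ref{lem:discreteness} to \(X'\), the group scheme \(\mathrm{Aut}(X')\) is zero-dimensional, so \(\mathrm{H}^0(X', T_{X'})=\mathrm{Lie}\,\mathrm{Aut}^0(X')=0\). By the Hwang--Kebekus--Peternell identification this forces \(\mathrm{H}^0(Y, f^{*}T_X)=0\), so \([f]\) is a reduced isolated point of \(\underline{\mathrm{Sur}}(Y,X)\). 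As \([f]\) was arbitrary and there are only finitely many connected components, \(\underline{\mathrm{Sur}}(Y,X)\) is a finite disjoint union of reduced points, hence finite, which is what we wanted.

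The main obstacle is the rigidity step, namely the correct invocation of \cite{HKP} to pass from infinitesimal deformations of the surjective morphism \(f\) to infinitesimal automorphisms of a finite cover, and then ensuring that this cover lies within the scope of Lemmas \ref{lem:covers} and \ref{lem:discreteness}. A secondary delicate point is the possible singularity of \(X\): since the Hwang--Kebekus--Peternell theorem is naturally formulated for smooth targets, one must work with the tangent sheaf \(T_{X'}\) of the cover (equivalently, with a model compatible with the factorization through \(\pi\)) rather than naively pulling back \(T_X\), and verify that the tangent space computation and the equality \(\mathrm{Lie}\,\mathrm{Aut}^0(X')=\mathrm{H}^0(X',T_{X'})\) remain valid in this generality.
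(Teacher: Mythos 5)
Your proposal is correct and follows essentially the same route as the paper: finite type via Lemma \ref{lem:boundedness_of_Sur}, then rigidity of each surjection via the theorem of Hwang--Kebekus--Peternell \cite{HKP} combined with Lemmas \ref{lem:covers} and \ref{lem:discreteness} applied to the finite cover it produces. The one point you flag but leave unresolved --- the possible singularity of $X$ --- is handled in the paper by first replacing $X$ with its normalization (every surjection from smooth $Y$ factors through it, and it is finite surjective, so Lemma \ref{lem:covers} preserves $\dim(X)$-algebraic boundedness), after which \cite{HKP} applies; the paper also states the rigidity as $\mathrm{Aut}^0(Z)$ surjecting onto the whole connected component of $[f]$ in $\underline{\mathrm{Sur}}(Y,X)$ rather than via the Zariski tangent space $\mathrm{H}^0(Y,f^{*}T_X)$, which sidesteps your secondary worry about identifying that tangent space over a singular target.
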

 \begin{proof}
Observe first that one can always suppose that \(Y\) is smooth. Indeed, if one takes \(\tilde{Y} \to Y\) a resolution of singularities of \(Y\), and if one knows that \(\underline{\Sur}(\tilde{Y}, X)\) is finite, then one immediately deduces the finiteness of \(\Sur(Y,X)\). Furthermore, by Lemma \ref{lem:covers}, one can always suppose that \(X\) is normal: indeed, every surjective morphism from \(Y\) smooth to \(X\) factors uniquely through the normalization \(\tilde{X} \to X\), where the normalization map is a surjective and finite morphism.
 
By Lemma \ref{lem:boundedness_of_Sur}, the scheme \(\underline{\mathrm{Sur}}(Y,X)\) is of finite type. Thus, it suffices to show that \(\underline{\mathrm{Sur}}(Y,X)\) is zero-dimensional. To do so, let \(f:Y\to X\) be a surjective morphism. As $X$ is non-uniruled (Lemma \ref{lem:non_uni}) and normal, by the theorem of Hwang-Kebekus-Peternell \cite{HKP}, there is a finite surjective morphism $Z\to X$ and a morphism $Y\to Z$ such that $f:Y\to X$ factors as 
\[
Y\to Z\to X,
\]
 with \(\mathrm{Aut}^0(Z)\) surjecting onto the connected component of \(f\) in \(\underline{\mathrm{Sur}}(Y,X)\).  Now, as $Z\to X$ is  finite surjective and $X$ is \(\dim X\) algebraically bounded, it follows from Lemma \ref{lem:covers} that $Z$ is \(\dim X=\dim Z\) algebraically bounded. In particular, it follows from  Lemma \ref{lem:discreteness}   that   $\mathrm{Aut}^0(Z)$ is trivial. As \(\Aut^0(Z)\) surjects onto the  connected component of  \(f\) in \(\underline{\mathrm{Sur}}(Y,X)\), it follows that the latter is trivial, which finishes the proof.
 \end{proof}
 

 \subsection{Birational selfmaps of \(\dim(X)\) algebraically bounded varieties} \label{section:hana}
 Let $X$ be a projective integral variety. We define $\mathrm{Bir}(X)$ to be the subscheme of  the Hilbert scheme $\mathrm{Hilb}(X\times X)$ parametrizing, roughly speaking, closed subschemes $Z\subset X\times X$ such that $Z$ is integral and both projections $Z\to X$ are birational (see \cite[Definition~1.8]{Hanamura} for a more precise formulation).  Note that  the (abstract) group $\mathrm{Bir}(X)$ of birational selfmaps $X\dashrightarrow X$ is in bijection with the  set of  $k$-points  of the $k$-scheme $\mathrm{Bir}(X)$.

 Hanamura shows that the scheme $\mathrm{Bir}(X)$ can be endowed with the structure of a  group scheme structure, assuming that $X$ is a (terminal) minimal model (see \cite[\S3]{Hanamura}).    We will use a slight extension of his result.
 
In  fact, in \cite[\S4]{ProShramov2014} Prokhorov and Shramov show that a proper non-uniruled integral variety over $k$ has a pseudo-minimal model.
 Hanamura's main result on the scheme $\mathrm{Bir}(X)$ for minimal models $X$ is easily seen to extend to  pseudo-minimal models by following his proof closely. Indeed,  Hanamura's proof relies on the fact that on a minimal model every pseudo-automorphism is an automorphism. This property also holds for pseudo-minimal models by \cite[Corollary~4.7]{ProShramov2014}. In particular, Hanamura's work  gives the following   statement (see \cite[\S3]{Hanamura}).  
 
 \begin{theorem}[Hanamura]\label{thm:han}
 If $X$ is  a pseudo-minimal model, then 
 the   scheme  $\mathrm{Bir}(X)$ can be endowed with the structure of a group scheme (over $k$)  such  that   $\mathrm{Bir}^0(X)$ is isomorphic to $\mathrm{Aut}^0(X)$. 
 \end{theorem}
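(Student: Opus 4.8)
The plan is to reprove Hanamura's theorem by following his construction in \cite{Hanamura} essentially verbatim, isolating the one geometric property of minimal models that his argument invokes and substituting its pseudo-minimal analogue from \cite[Corollary~4.7]{ProShramov2014}. Recall that $\mathrm{Bir}(X)$ already carries a scheme structure as a subscheme of $\mathrm{Hilb}(X\times X)$, parametrizing integral $Z\subset X\times X$ whose two projections to $X$ are birational. What remains is therefore twofold: to equip this scheme with the three structure morphisms of a group scheme, and to identify its identity component with $\mathrm{Aut}^0(X)$. I would carry out these two tasks in turn, checking at each point that the only input about the singularities and minimality of $X$ is the statement that every pseudo-automorphism of $X$ is an automorphism.

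For the group-scheme structure, the unit is the class of the diagonal $\Delta_X\subset X\times X$, and the inverse is induced by the involution of $X\times X$ exchanging the two factors, which is an automorphism of $\mathrm{Hilb}(X\times X)$ preserving $\mathrm{Bir}(X)$; both are immediate and insensitive to the type of $X$. The substance lies in the multiplication. Given two families of graphs $Z_1,Z_2$, one composes them as correspondences inside $X\times X\times X$ and projects to the outer two factors, and one must verify that the resulting family is again a family of graphs of birational self-maps, and that it depends morphically on $(Z_1,Z_2)$. This is exactly where minimality enters Hanamura's proof: because a birational self-map of a (pseudo-)minimal model is an isomorphism in codimension one, no divisor is contracted, the composite correspondence coincides with the honest graph of the composed map, and the construction remains flat in families. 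Associativity and the group axioms are then inherited from composition of birational maps. I expect the main obstacle to be checking that Hanamura's flatness and base-change bookkeeping for this correspondence product survives the passage to pseudo-minimal models; but since the sole property of $X$ that his argument uses is that its pseudo-automorphisms are automorphisms, and this is supplied by \cite[Corollary~4.7]{ProShramov2014}, the verification is routine rather than novel.

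For the identity component, I would consider the natural map $\mathrm{Aut}(X)\hookrightarrow\mathrm{Bir}(X)$ sending an automorphism to its graph, which is a homomorphism of group schemes and hence carries $\mathrm{Aut}^0(X)$ into $\mathrm{Bir}^0(X)$. Its kernel consists of automorphisms whose graph is the diagonal, so it is a monomorphism. By the step above, every $k$-point of $\mathrm{Bir}^0(X)$ is a pseudo-automorphism connected to the identity, hence an automorphism by \cite[Corollary~4.7]{ProShramov2014}; thus the map is bijective on $k$-points. Since $k$ is algebraically closed of characteristic zero, both $\mathrm{Aut}^0(X)$ and $\mathrm{Bir}^0(X)$ are smooth, connected, finite-type group schemes, and a bijective monomorphism of such group schemes is a closed immersion onto a reduced target, hence an isomorphism. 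This yields $\mathrm{Bir}^0(X)\simeq\mathrm{Aut}^0(X)$ and completes the argument.
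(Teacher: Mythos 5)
Your proposal follows essentially the same route as the paper: the paper's own justification of Theorem~\ref{thm:han} is precisely that Hanamura's construction of the group scheme structure on $\mathrm{Bir}(X)$ carries over verbatim to pseudo-minimal models once one knows that the relevant pseudo-automorphisms are automorphisms, which is supplied by \cite[Corollary~4.7]{ProShramov2014}. Your write-up simply fleshes out more of Hanamura's construction (unit, inverse, correspondence product, and the identification of $\mathrm{Bir}^0(X)$ with $\mathrm{Aut}^0(X)$ via the graph embedding) than the paper does, but the key input and the overall logic are identical.
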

 
 We use Hanamura's structure result to prove the following finiteness result.
 
 \begin{proposition}\label{prop:bir} Let $X$ be a projective variety.
If $X$ is   $\dim(X)$-algebraically bounded, then $\mathrm{Bir}(X)$ is finite.
\end{proposition}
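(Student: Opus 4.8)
The plan is to reduce to the case where $X$ is a pseudo-minimal model, apply Hanamura's structure theorem, and then prove finiteness of the resulting group scheme by combining the zero-dimensionality of $\mathrm{Aut}^0$ with a boundedness argument that controls the number of connected components. First, by Lemma \ref{lem:non_uni} the variety $X$ is non-uniruled, so by Prokhorov--Shramov it admits a pseudo-minimal model $X'$, which is birational to $X$ and hence also non-uniruled. A birational map $X \dashrightarrow X'$ induces an isomorphism of abstract groups $\mathrm{Bir}(X) \cong \mathrm{Bir}(X')$, and in characteristic zero the relevant group scheme is smooth, so that finiteness of the scheme is equivalent to finiteness of its group of $k$-points; it therefore suffices to treat $X'$. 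To keep the hypothesis available, I would first record that $\dim$-algebraic boundedness is a birational invariant of projective varieties: given a common resolution $W\to X$, $W\to X'$, this follows by comparing, via the projection formula, the data of a non-degenerate map $Y\dashrightarrow X'$ with that of its composite $Y\dashrightarrow X$ and invoking Proposition \ref{prop: bound Hil}. Thus I may and do assume that $X$ itself is a pseudo-minimal model.

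By Hanamura's Theorem \ref{thm:han}, the locus $\mathrm{Bir}(X)\subset \mathrm{Hilb}(X\times X)$ carries the structure of a group scheme with $\mathrm{Bir}^0(X)\cong \mathrm{Aut}^0(X)$. Being smooth and locally of finite type over $k$, this group scheme is finite if and only if it is both of finite type (finitely many connected components) and zero-dimensional. The zero-dimensionality is immediate: by Lemma \ref{lem:discreteness} the group scheme $\mathrm{Aut}(X)$ is zero-dimensional, so $\mathrm{Aut}^0(X)$, and therefore $\mathrm{Bir}^0(X)$, is trivial.

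It remains to prove that $\mathrm{Bir}(X)$ has only finitely many components, i.e. that the Hilbert polynomials of the graphs $\Graph(g)\subset X\times X$ of birational self-maps $g$ take finitely many values. I would fix a resolution $\rho\colon \tilde X \to X$ with $\dim \tilde X = p = \dim X$, and attach to each $g$ the dominant, hence non-degenerate, rational map $h_g := g\circ \rho\colon \tilde X \dashrightarrow X$ from the \emph{fixed} smooth $p$-dimensional variety $\tilde X$. The $\dim(X)$-boundedness of $X$ together with Proposition \ref{prop: bound Hil} bounds the Hilbert polynomials of the $h_g$ uniformly in $g$, and in particular the intersection numbers $(\mathrm{pr}_1^*A)^a\cdot(\mathrm{pr}_2^*L)^{p-a}\cdot[\Graph(h_g)]$ for a fixed ample $A$ on $\tilde X$ and $L$ on $X$. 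Choosing $N$ (independent of $g$) with $NA-\rho^*L$ nef and transporting these bounds along the birational morphism $\Graph(h_g)\to \Graph(g)$ induced by $\rho\times\mathrm{id}$, which identifies $\mathrm{pr}_2^*L$ with $\mathrm{pr}_2^*L$ and $\mathrm{pr}_1^*\rho^*L$ with $\mathrm{pr}_1^*L$, the monotonicity $\mathrm{pr}_1^*\rho^*L \preceq N\,\mathrm{pr}_1^*A$ of intersection against nef classes yields a uniform bound on $(\mathrm{pr}_1^*L)^a(\mathrm{pr}_2^*L)^{p-a}\cdot[\Graph(g)]$ for all $a$. Feeding this into the Koll\'ar--Matsusaka estimate as in Proposition \ref{prop: bound Hil} then bounds the Hilbert polynomials of $\Graph(g)$ in $X\times X$, so $\mathrm{Bir}(X)$ is of finite type; combined with the previous paragraph it is finite.

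The step I expect to be most delicate is this finite-type argument: the Koll\'ar--Matsusaka bound requires controlling not only the top intersection $M^{p}\cdot[\Graph(g)]$ but also $M^{p-1}\cdot K_{\Graph(g)}$ for the polarization $M=(\mathrm{pr}_1^*L\otimes\mathrm{pr}_2^*L)|_{\Graph(g)}$, and the canonical class of the (possibly singular) graph varies with $g$. Here I would exploit that, on the pseudo-minimal model $X$, every birational self-map is a pseudo-isomorphism, so that $K_X$ is $g$-invariant and $p_2^*K_X = p_1^*K_X$ on any resolution of $\Graph(g)$; together with the boundedness of the discrepancies coming from the terminal singularities of $X$, this should bound the canonical contribution uniformly. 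A secondary technical point, used already in the reduction, is the birational invariance of $\dim$-algebraic boundedness.
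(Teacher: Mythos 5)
Your proposal follows essentially the same route as the paper: non-uniruledness via Lemma \ref{lem:non_uni}, passage to a pseudo-minimal model via Prokhorov--Shramov, Hanamura's group scheme structure with $\mathrm{Bir}^0\cong\mathrm{Aut}^0$, triviality of $\mathrm{Aut}^0$ from Lemma \ref{lem:discreteness}, and a boundedness argument for the number of components; the reduction steps (birational invariance of $\dim(X)$-algebraic boundedness, transporting degree bounds from $\Graph(g\circ\rho)$ to $\Graph(g)$ via the projection formula and nefness) are correct and in fact supply details the paper leaves implicit. The one place where you make life harder than necessary is the finite-type step: you aim to run Koll\'ar--Matsusaka directly on the graphs $\Graph(g)\subset X\times X$, which forces you to control $M^{p-1}\cdot K_{\Graph(g)}$ for varying singular graphs, and your proposed fix via pseudo-isomorphism invariance of $K_X$ and discrepancy bounds is only sketched. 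This detour is avoidable: once you have bounded the degree of the integral subvarieties $\Graph(g)$ with respect to a fixed polarization of $X\times X$ (which your argument does achieve), the standard fact that integral closed subschemes of a fixed projective variety of bounded degree form a bounded family already yields finitely many Hilbert polynomials, hence finite type of $\mathrm{Bir}(X)$, with no need to touch the canonical class of the graphs.
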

\begin{proof}  
As $X$ is $\dim(X)$ algebraically bounded, one knows that  \(X\) is non-uniruled (Lemma \ref{lem:non_uni}). Therefore, by the work of Prokhorov-Shramov \cite[\S4]{ProShramov2014},  the projective variety \(X\) has a pseudo-minimal model. 
Let \(Y\) be a pseudo-minimal model for \(X\). Note that \(\mathrm{Bir}(X) = \mathrm{Bir}(Y)\) (since \(X\) and \(Y\) are birational). Now,  by Theorem \ref{thm:han},   the   scheme $\mathrm{Bir}(Y)$ can be endowed with the structure of a group scheme  in such a way that   $\mathrm{Bir}^0(Y) = \mathrm{Aut}^0(Y)$. Since $X$ is  $\dim(X)$ algebraically bounded and $Y$ is birational to $X$, it follows that \(Y\) is also \(\dim(X)\) algebraically bounded. In particular, \(\Aut(Y)\) is trivial (Lemma \ref{lem:discreteness}) and thus Hanamura's group scheme \(\mathrm{Bir}(Y)\) is zero-dimensional. Finally, to conclude that \(\mathrm{Bir}(X)\) is finite, it suffices to show that \(\mathrm{Bir}(Y)\) is of finite type.  This boundedness statement is a straightforward consequence of the definition of \(\mathrm{Bir}(Y)$ and the fact that \(Y\) is $\dim(Y)$ algebraically bounded.
\end{proof}
 
 Note that the special case of Theorem \ref{thm: criteria for bounded} with \(p=\dim(X)\) combined with Theorem \ref{thm:sur} and Proposition \ref{prop:bir} gives an alternative proof of (a slightly weaker form of) Kobayachi-Ochiai finiteness theorems:
 \begin{corollary}[Kobayachi-Ochiai]
Let  $X$ be a smooth projective variety of general type. Then for any projective variety \(Y\), the set of surjective morphisms \(\Sur(Y,X)\) is finite. Furthermore, the set of bimeromorphisms \(\Bir(X)\) is also finite.
 \end{corollary}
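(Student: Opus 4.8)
The plan is to reduce the corollary to the two finiteness results already established for $\dim(X)$-algebraically bounded varieties, namely Theorem \ref{thm:sur} and Proposition \ref{prop:bir}, by feeding them through the positivity criterion of Theorem \ref{thm: criteria for bounded}. The whole argument is a matter of chaining the previous results correctly; the only genuinely careful point is why the exceptional locus $\Delta$ can be ignored in the top-dimensional setting.

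First I would record the standard dictionary for varieties of general type. Saying that $X$ is of general type means precisely that $\omega_X$ is big, and since $\bigwedge^{\dim(X)}\Omega^1_X=\det\Omega^1_X=\omega_X$ is a line bundle, this is in turn equivalent to $\bigwedge^{\dim(X)}\Omega^1_X$ being ample modulo the proper closed subset $\Delta:=B^+(\omega_X)$, its augmented base locus (as explained in the introduction). Applying Theorem \ref{thm: criteria for bounded} with $p=\dim(X)$ then yields that $X$ is $\dim(X)$-algebraically hyperbolic modulo $\Delta$.

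The observation that makes the equidimensional case clean is that this $\Delta$ imposes no actual constraint. Indeed, any non-degenerate rational map $f\colon Y\dashrightarrow X$ from a smooth projective $Y$ with $\dim(Y)=\dim(X)$ is dominant, by the definition of non-degeneracy when source and target have equal dimension; hence its image is dense in $X$ and cannot be contained in the proper closed subset $\Delta$. Thus the clause ``image not contained in $\Delta$'' in Definition \ref{def:alg} is automatically satisfied, and being $\dim(X)$-algebraically hyperbolic modulo $\Delta$ coincides with being (unconditionally) $\dim(X)$-algebraically hyperbolic. By Proposition \ref{prop: hyp=>bound}, $X$ is therefore $\dim(X)$-algebraically bounded. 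This is the step I expect to carry the most conceptual weight, since it is what upgrades the a priori only pseudo-conclusion of Theorem \ref{thm: criteria for bounded} to the genuine boundedness hypothesis demanded downstream.

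Finally I would invoke the two finiteness statements proved above. Theorem \ref{thm:sur} applied to the $\dim(X)$-algebraically bounded variety $X$ shows that $\underline{\mathrm{Sur}}(Y,X)$ is finite for every projective variety $Y$, so in particular its set of $k$-points, the set $\Sur(Y,X)$ of surjective morphisms $Y\to X$, is finite. Likewise Proposition \ref{prop:bir} gives that $\Bir(X)$ is finite. Since $X$ is complex projective, bimeromorphic self-maps coincide with birational self-maps, so this is exactly the asserted finiteness of the set of bimeromorphisms of $X$, completing the proof.
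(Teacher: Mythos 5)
Your proposal is correct and follows essentially the same route as the paper, which simply notes that the corollary is the special case $p=\dim(X)$ of Theorem \ref{thm: criteria for bounded} combined with Theorem \ref{thm:sur} and Proposition \ref{prop:bir}; you additionally make explicit the (correct, and implicitly used) observation that the ``modulo $\Delta$'' clause is vacuous in the top-dimensional case because non-degenerate maps are then dominant, which is exactly what licenses passing from the pseudo-statement to genuine $\dim(X)$-algebraic boundedness.
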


\subsection{Finiteness results for intermediate pseudo bounded and pseudo hyperbolic varieties}
Using Proposition \ref{prop: string bound},  Theorem \ref{thm:sur} and Proposition \ref{prop:bir}, we obtain immediately the following:
 \begin{theorem}
 Assume that \(k\) is uncountable. Let   \(X\) be a projective pseudo-\(p\)-algebraically bounded variety, with \(1 \leq p \leq \dim(X)\). Then the set of bimeromorphisms \(\Bir(X)\) is finite, and for any projective variety \(Y\), the set of surjective morphisms \(\Sur(Y,X)\) is finite.
 \qed
 \end{theorem}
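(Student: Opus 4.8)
The plan is to reduce everything to the case $p = \dim(X)$, which is already settled by Theorem \ref{thm:sur} and Proposition \ref{prop:bir}, and then simply quote those results. By definition of pseudo-$p$-algebraic boundedness, there is a proper closed subset $\Delta \subsetneq X$ such that $X$ is $p$-algebraically bounded modulo $\Delta$. The first step I would take is to climb the string of implications: since $k$ is assumed uncountable, iterating Proposition \ref{prop: string bound} from level $p$ up to level $\dim(X)$ shows that $X$ is $\dim(X)$-algebraically bounded modulo $\Delta$. This is the unique place where the uncountability hypothesis intervenes, and it is exactly what allows the argument to start from an arbitrary intermediate level $p$ rather than from $p=\dim(X)$ directly.

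The next step is to observe that at the top dimension the modifier ``modulo $\Delta$'' is vacuous. Indeed, let $Y$ be a smooth projective variety with $\dim(Y) = \dim(X)$ and let $f : Y \dashrightarrow X$ be a non-degenerate rational map. By the definition of non-degeneracy, the closure of its image has dimension $\min(\dim Y, \dim X) = \dim(X)$; since $X$ is irreducible, this forces $\overline{f(Y)} = X$, and hence $f(Y) \not\subset \Delta$ automatically for every proper closed subset $\Delta$. Consequently the defining inequality of $\dim(X)$-algebraic boundedness modulo $\Delta$ is imposed on precisely the same family of maps as the inequality of plain $\dim(X)$-algebraic boundedness, so that $X$ is in fact $\dim(X)$-algebraically bounded (modulo the empty subset).

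With this reduction in hand, the two finiteness statements follow directly. Applying Theorem \ref{thm:sur} to the $\dim(X)$-algebraically bounded variety $X$ shows that the scheme $\underline{\Sur}(Y,X)$ is finite for every projective variety $Y$; passing to $k$-points then yields that the set $\Sur(Y,X)$ of surjective morphisms is finite. Likewise, Proposition \ref{prop:bir} shows that the scheme $\Bir(X)$ is finite, and therefore its set of $k$-points, the group of birational selfmaps of $X$, is finite.

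I expect the only real subtlety to be the interplay between the ``modulo $\Delta$'' condition carried by the hypothesis and the unconditional form of Theorem \ref{thm:sur} and Proposition \ref{prop:bir}: one must verify that at the top dimension the exceptional locus plays no role, which is exactly the image-density observation of the second paragraph. Everything else is a purely formal assembly of the previously established propositions, which is why the conclusion is essentially immediate once the string of implications has been used to reach level $\dim(X)$.
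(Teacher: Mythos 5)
Your proposal is correct and follows essentially the same route as the paper: iterate the string-of-implications proposition (using uncountability of $k$) to reach $\dim(X)$-algebraic boundedness modulo $\Delta$, note that at top dimension every non-degenerate map is dominant so the ``modulo $\Delta$'' qualifier is vacuous, and then invoke the finiteness results for $\dim(X)$-algebraically bounded varieties. The paper states this as immediate; your explicit justification that the exceptional locus plays no role at top dimension is exactly the implicit step being used.
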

 In particular, this gives Theorem \ref{thm:2} as \(\CC\) is uncountable.
 Using this time Proposition \ref{prop: string bound} and Proposition \ref{prop: string hyp}, with  Theorem \ref{thm:sur} and Proposition \ref{prop:bir}, we obtain immediately the following, where the uncountability hypothesis has been dropped:
 \begin{theorem}
 Let   \(X\) be a projective pseudo-\(p\)-algebraically hyperbolic variety, with \(1 \leq p \leq \dim(X)\). Then the set of bimeromorphisms \(\Bir(X)\) is finite, and for any projective variety \(Y\), the set of surjective morphisms \(\Sur(Y,X)\) is finite.
 \qed
 \end{theorem}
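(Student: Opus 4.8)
The plan is to reduce this hyperbolic finiteness statement to the boundedness-based finiteness results already established (Theorem \ref{thm:sur} and Proposition \ref{prop:bir}), while taking care to avoid the uncountability hypothesis that was necessary for the boundedness string. First I would unpack the hypothesis: by definition, pseudo-\(p\)-algebraic hyperbolicity of \(X\) means that there is a proper closed subset \(\Delta \subsetneq X\) such that \(X\) is \(p\)-algebraically hyperbolic modulo \(\Delta\). The essential first move is to climb to the top dimension \emph{within the hyperbolic world}: applying Proposition \ref{prop: string hyp} repeatedly (and crucially this proposition carries no countability hypothesis) shows that \(X\) is \(\dim(X)\)-algebraically hyperbolic modulo \(\Delta\). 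Only at the very top do I pass to boundedness, via Proposition \ref{prop: hyp=>bound}, obtaining that \(X\) is \(\dim(X)\)-algebraically bounded modulo \(\Delta\), that is, pseudo-\(\dim(X)\)-algebraically bounded. Routing the argument through the hyperbolicity string first, rather than bounding already at level \(p\) and then invoking Proposition \ref{prop: string bound}, is precisely what lets us dispense with the assumption that \(k\) be uncountable.

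Next I would feed this conclusion into the finiteness machinery of Section \ref{section: finiteness}. The one point that genuinely needs checking is that those results, stated for \(\dim(X)\)-algebraically bounded varieties, persist in the \emph{modulo \(\Delta\)} setting. Here the restriction to surjective morphisms and birational self-maps is decisive: if \(f : Y \to X\) is surjective, or if \(X \dashrightarrow X\) is birational, then its image is all of \(X\) (respectively, dense), and so is never contained in the proper closed subset \(\Delta\). Consequently the boundedness hypothesis modulo \(\Delta\) applies to every map entering the argument, and the proofs of Theorem \ref{thm:sur} and Proposition \ref{prop:bir}, which only ever manipulate surjective morphisms (and, for \(\Bir\), birational maps to a pseudo-minimal model, themselves dominant), go through verbatim. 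In particular the non-uniruledness of \(X\) (as in Lemma \ref{lem:non_uni}), the zero-dimensionality of \(\Aut(X)\) (as in Lemma \ref{lem:discreteness}), and the finite type of \(\underline{\Sur}(Y,X)\) (as in Lemma \ref{lem:boundedness_of_Sur}) all remain valid, since each is witnessed by families of dominant or surjective maps whose images escape \(\Delta\).

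The step I expect to be the main obstacle is exactly this last verification: confirming, line by line, that each ingredient of the finiteness proofs only ever invokes maps whose image is not contained in \(\Delta\), so that ``bounded modulo \(\Delta\)'' is as strong as ``bounded'' for the purposes at hand. Once that bookkeeping is in place, the finiteness of \(\Sur(Y,X)\) for every projective variety \(Y\), and the finiteness of \(\Bir(X)\), follow directly from Theorem \ref{thm:sur} and Proposition \ref{prop:bir}. I would then close by emphasizing that, in contrast with the companion statement for pseudo-\(p\)-algebraically bounded \(X\), no countability assumption on \(k\) is needed here, precisely because the ascent to dimension \(\dim(X)\) was carried out along the hyperbolicity string (Proposition \ref{prop: string hyp}) before any passage to boundedness.
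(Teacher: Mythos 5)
Your argument is correct and follows essentially the same route as the paper: climb to $p=\dim(X)$ along the hyperbolicity string via Proposition \ref{prop: string hyp} (which carries no uncountability hypothesis), only then pass to boundedness via Proposition \ref{prop: hyp=>bound}, and feed the result into Theorem \ref{thm:sur} and Proposition \ref{prop:bir}. The ``main obstacle'' you flag is actually vacuous: when $p=\dim(X)$ a non-degenerate rational map from a $p$-dimensional variety is dominant, so its image is never contained in the proper closed subset $\Delta$, and hence $\dim(X)$-algebraic boundedness modulo $\Delta$ coincides with $\dim(X)$-algebraic boundedness outright, so the cited finiteness results apply verbatim without any line-by-line re-examination.
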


  \bibliography{refsperiod}{}
\bibliographystyle{alpha}

\end{document}